\newtheorem{theorem}{Theorem}[section]
\newtheorem{lemma}[theorem]{Lemma}
\newtheorem{proposition}[theorem]{Proposition}
\newtheorem{remark}[theorem]{Remark}
\newtheorem{example}[theorem]{Example}
\newtheorem{corollary}[theorem]{Corollary}
\def\to{\rightarrow}
\def\r{\mathrm}
\def\c{\mathcal}
\def\b{\mathbf}
\def\f{\mathfrak}
\def\ot{\otimes}
\begin{document}
\baselineskip15pt
\title[Banach algebras of generalized matrices]
{A class of Banach algebras of generalized matrices}
\author[M.M. Sadr]{Maysam Maysami Sadr}
\address{Depertment of Mathematics\\
Institute for Advanced Studies in Basic Sciences (IASBS)\\
P.O. Box 45195-1159, Zanjan 45137-66731, Iran}
\email{sadr@iasbs.ac.ir}
\subjclass[2010]{46H05; 46H35; 46H10; 47B48; 46H25.}
\keywords{Banach algebra; generalized matrix; approximate unit; ideal; derivation}
\begin{abstract}
We introduce a class of Banach algebras of generalized matrices and study the existence of approximate units, ideal structure,
and derivations of them.
\end{abstract}
\maketitle
\section{Introduction}
Let $\f{X}$ be a compact metrizable space and $\f{m}$ be a Borel probability measure on $\f{X}$. In this note we study some aspects of the
algebraic structure of a Banach algebra $\c{M}$ of generalized complex matrices whose their arrays are indexed by elements of $\f{X}^2$ and
vary continuously. The multiplication of $\c{M}$ is defined similar to the ordinary matrix multiplication and uses $\f{m}$
as the weight for arrays. See Section 2 for exact definition. In the case that $\f{m}$ has full support, $\c{M}$ is isometric isomorphic to
a subalgebra of compact operators acting on the Banach space of continuous functions on $\f{X}$. Indeed any element of $\c{M}$
defines an integral operator in a canonical way. Thus $\c{M}$ can be interpreted as a Banach algebra of integral operators or kernels (\cite{Beaver1}).
In Section 3 we investigate the existence of approximate units of $\c{M}$. In Section 4 we show that if $\f{X}$ is infinite then the center of $\c{M}$
is zero. In Section 5 we study ideal structure of $\c{M}$. In Section 6 we consider some classes of representations of $\c{M}$.
In Section 7 we show that under some mild conditions bounded derivations on $\c{M}$ are approximately inner.

\textbf{Notations.}
For a compact space $\f{X}$ and a Banach space $E$ we denote by $\b{C}(\f{X};E)$ the Banach space of continuous $E$-valued functions on $\f{X}$
with supremum norm. We also let $\b{C}(\f{X}):=\b{C}(\f{X};\mathbb{C})$. There is a canonical isometric isomorphism
$\b{C}(\f{X};E)\cong\b{C}(\f{X})\check{\ot} E$ where $\check{\ot}$ denotes the completed injective tensor product.
The phrase ``point-wise convergence topology''  is abbreviated to ``pct''.
By pct on $\b{C}(\f{X};E)$ we mean the vector topology under which a net $(f_\lambda)_\lambda\in\b{C}(\f{X};E)$ converges to $f$ if and only if
$f_\lambda(x)\to f(x)$ in the norm of $E$ for every $x\in\f{X}$. If $f$ and $f'$ are complex functions on spaces $\f{X}$ and $\f{X}'$
then $f\ot f'$ denotes the function on $\f{X}\times\f{X}'$ defined by $(x,x')\mapsto f(x)f'(x')$.
The support of a Borel measure $\f{m}$ is denoted by $\r{Sp}\f{m}$. $\c{B}_{x,\delta}$ denotes the open ball with center at $x$ and radius $\delta$.
\section{The main definitions}
Let $\f{X}$ be a compact metrizable space and $\f{m}$ be a Borel probability measure on $\f{X}$.
By analogy with matrix multiplication we let the convolution of $f,g\in\b{C}(\f{X}^2)$ be defined by
$f\star g(x,y)=\int_Xf(x,z)g(z,y)\r{d}\f{m}(z)$. Also by analogy with matrix adjoint we let $f^*\in\b{C}(\f{X}^2)$ be defined by
$f^*(x,y)=\bar{f}(y,x)$. It is easily verified that $\star$ is an associative multiplication, $*$ is an involution, and also,
$\|f\star g\|_\infty\leq\|f\|_\infty\|g\|_\infty$ and $\|f^*\|_\infty=\|f\|_\infty$; thus $\b{C}(\f{X}^2)$ becomes a Banach $*$-algebra
which we denote by $\c{M}_{\f{X},\f{m}}$. If $\f{X}$ is a finite space with $n$ distinct elements
$x_1,\cdots,x_n$ and $\r{Sp}\f{m}=\f{X}$ then the assignment $(a_{ij})\mapsto((x_i,x_j)\mapsto\frac{1}{\f{m}\{x_i\}}a_{ij})$
defines a $*$-algebra isomorphism from the algebra of $n\times n$ matrices onto $\c{M}_{\f{X},\f{m}}$.

Beside norm and pc topologies on $\c{M}_{\f{X},\f{m}}$ we need two other topologies:
Consider the canonical isometric isomorphism $f\mapsto(y\mapsto f(\cdot,y))$ from $\b{C}(\f{X}^2)$ onto $\b{C}(\f{X};\b{C}(\f{X}))$.
We define the column-wise convergence topology (cct for short)
on $\c{M}_{\f{X},\f{m}}$ to be the pull back of the pct on $\b{C}(\f{X};\b{C}(\f{X}))$ under this isomorphism.
The row-wise convergence topology (rct for short) on $\c{M}_{\f{X},\f{m}}$ is defined similarly
by using the other canonical isomorphism  $f\mapsto(x\mapsto f(x,{\cdot}))$.
The pct on $\b{C}(\f{X}^2)=\c{M}_{\f{X},\f{m}}$ is contained in the intersection of cct and rct.
Column-wise and row-wise cts are \emph{adjoint} to each other in the sense that the involution
$*$ from $\c{M}_{\f{X},\f{m}}$ with cct to $\c{M}_{\f{X},\f{m}}$ with rct is a homeomorphism.
If $a_\lambda\xrightarrow{cct}a$ and $b_\lambda\xrightarrow{rct}b$ in $\c{M}_{\f{X},\f{m}}$, then
$c\star a_\lambda\xrightarrow{cct}c\star a$ and $b_\lambda\star c\xrightarrow{rct}b\star c$ for every $c$.

The assignment $(\f{X},\f{m})\mapsto\c{M}_{\f{X},\f{m}}$ can be considered as a cofunctor from the category of pairs $(\f{X},\f{m})$
to the category of Banach $*$-algebras: Suppose that $(\f{X}',\f{m}')$ is another pair of a compact metrizable space
and a Borel probability measure on it. Let $\alpha:\f{X}'\to\f{X}$
be a measure preserving continuous map. Then $\alpha$ induces a bounded $*$-algebra morphism $\c{M}\alpha$ from  $\c{M}_{\f{X},\f{m}}$ into
$\c{M}_{\f{X}',\f{m}'}$ defined by $[(\c{M}\alpha)f](x',y')=f(\alpha(x'),\alpha(y'))$. By an explicit example we show that
$\c{M}$ as a functor is not full: Let $\beta:\f{X}\to\mathbb{C}$ be a continuous function with $|\beta|=1_\f{X}$ and $\beta\neq 1_\f{X}$.
Then $\hat{\beta}:\c{M}_{\f{X},\f{m}}\to\c{M}_{\f{X},\f{m}}$ defined by $(\hat{\beta}f)(x,y)=\beta(x)f(x,y)\bar{\beta}(y)$
is an isometric $*$-algebra isomorphism. It is clear that $\hat{\beta}$ is not of the form $\c{M}\alpha$ for any $\alpha:\f{X}\to\f{X}$.

Let $\f{X}_0$ be a closed subset of $\f{X}$ containing $\r{Sp}\f{m}$ and let $\iota:\f{X}_0\to\f{X}$ denote the embedding.
Then $\c{M}\iota:\c{M}_{\f{X},\f{m}}\to\c{M}_{\f{X}_0,\f{m}}$ is surjective with kernel $I:=\{f:f|_{\f{X}^2_0}=0\}$.
Thus $\c{M}_{\f{X},\f{m}}$ is an extension of $\c{M}_{\f{X}_0,\f{m}}$ by the closed self-adjoint ideal $I$. Moreover, suppose that
$\f{X}_0$ is a retract of $\f{X}$ i.e. there is a continuous map $\rho:\f{X}\to\f{X}_0$ with $\rho\iota=\r{id}_{\f{X}_0}$.
It follows from functoriality of $\c{M}$ that $(\c{M}\iota)(\c{M}\rho)$ is the identity morphism on $\c{M}_{\f{X}_0,\f{m}}$.
This shows that the mentioned extension splits strongly in the sense of \cite[Definition 1.2]{BadeDalesLykova1}.
The discussion we just had, shows that by removing the null part of $\f{m}$ from $\f{X}$
we do not lose the principal part of the structure of $\c{M}_{\f{X},\f{m}}$. We will see
that $\r{Sp}\f{m}=\f{X}$ is a crucial condition for the study of $\c{M}_{\f{X},\f{m}}$.

For any closed subset $C$ of $\f{X}$ we have $\f{m}(C)=\inf [(1_\f{X}\ot f)\star 1_{\f{X}^2}](x,y)$,
the infimum being taken over all continuous functions $f$
on $\f{X}$ with $f(\f{X})\subseteq[0,1]$ and $f(C)=\{1\}$. Using this and inner regularity of $\f{m}$ we can find the measure
of any Borel subset. Hence we can recover $\f{m}$ from $\c{M}_{\f{X},\f{m}}$.
The author does not know if the homeomorphism type of $\f{X}$ can be recovered from $\c{M}_{\f{X},\f{m}}$.
Suppose that $\f{X}$ is finite with $\r{Sp}\f{m}=\f{X}$. It is not so hard to see that if
$\phi:\c{M}_{\f{X},\f{m}}\to\c{M}_{\f{X}',\f{m}'}$ is an isometric $*$-isomorphism then
there exist a measure preserving injective and surjective map $\alpha:\f{X}'\to\f{X}$ and a function
$\beta:\f{X}\to\mathbb{C}$, with $|\beta|=1_\f{X}$, such that
$\phi=(\c{M}\alpha)\hat{\beta}$, where $\hat{\beta}$ is defined as above.
(Note that if $\phi$ is not supposed to be isometric then this assertion is wrong.)
We suggest that this conclusion is true for any arbitrary $\f{X}$ with $\r{Sp}\f{m}=\f{X}$.

In Koopman's theory, as it is well known, the operator algebras have many applications to
study of dynamical systems and ergodic theory (\cite{EisnerFarkasHaaseNagel1}).
In this direction, the study of algebraic properties of $\c{M}_{\f{X},\f{m}}$ may be useful: Let $G$ be a discrete group of measure preserving
homeomorphisms of $\f{X}$. Then $G$ acts on $\c{M}_{\f{X},\f{m}}$ by isometric automorphisms and thus it is appropriate
to consider the crossed product Banach algebra $A:=G\ltimes\c{M}_{\f{X},\f{m}}$. It is clear that any algebraic invariant of $A$ is an invariant
of the dynamical system $(\f{X},G)$. Moreover, if the suggestion stated in the preceding paragraph is true, then $(\f{X},G)$ is
completely characterized by $A$. We plan to discuss elsewhere such possible connections with ergodic theory.
\section{Approximate units of $\c{M}$}
From now on, $\f{X}$ is a fixed compact metrizable space, $\f{m}$ is a fixed Borel probability measure on $\f{X}$
with $\r{Sp}\f{m}=\f{X}$, and $\c{M}$ will denote $\c{M}_{\f{X},\f{m}}$. We also let $\f{d}$ denote a compatible metric on $\f{X}$.
A right norm- (resp. pc-, cc-, rc-) approximate unit for $\c{M}$
is a net $(u_\lambda)_\lambda$ in $\c{M}$ such that $au_\lambda\to a$
in the norm topology (resp. pct, cct, rct) for every $a\in A$. If $\sup_{\lambda}\|u_\lambda\|_\infty<\infty$ then $(u_\lambda)_\lambda$ is called bounded. (Bounded) left and two-sided norm- (resp. pc-, cc-, rc-) approximate units are defined similarly.
It is clear that every norm-approximate unit is a pc-approximate unit.
Suppose that $x\in\f{X}$ and $\delta>0$. Throughout, $\c{O}_{x,\delta}$ denotes an open set
with $\overline{\c{B}}_{x,\delta}\subseteq\c{O}_{x,\delta}\subseteq\c{B}_{x,2\delta}$ and
$\f{m}(\c{O}_{x,\delta}\setminus\c{B}_{x,\delta})<\delta\f{m}(\c{B}_{x;\delta})$;
also $\c{E}_{x,\delta}$ denotes a continuous function from $\f{X}$ to the interval $[0,1]$ such that the restriction of
$\c{E}_{x,\delta}$ to $\overline{\c{B}}_{x,\delta}$ (resp.  $\f{X}\setminus\c{O}_{x,\delta}$) takes the constant value $1$ (resp. $0$).
\begin{theorem}\label{TT1}
There is a net in $\c{M}$ which is mutually a right cc-approximate unit and a left rc-approximate unit.
Thus the same net is also a two-sided pc-approximate unit.
\end{theorem}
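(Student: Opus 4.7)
The natural approach is to build a single net indexed by $\delta>0$ (directed by reverse order) out of a symmetric, continuous ``approximate delta on the diagonal''. If the kernel is symmetric, $u_\delta(z,y)=u_\delta(y,z)$, then the cct-estimate for $a\star u_\delta$ and the rct-estimate for $u_\delta\star b$ are mirror images of each other and a single net serves both roles. The two-sided pc-au conclusion is then immediate from the inclusion $\r{pct}\subseteq\r{cct}\cap\r{rct}$ noted in Section~2.

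\textbf{Construction.} For each $\delta>0$, use compactness of $\f{X}$ to choose $x_1,\dots,x_n\in\f{X}$ (depending on $\delta$) with $\bigcup_i\c{B}_{x_i,\delta}=\f{X}$. Because $\c{E}_{x_i,\delta}\equiv 1$ on $\c{B}_{x_i,\delta}$, the sum $\Sigma:=\sum_i\c{E}_{x_i,\delta}$ is $\ge 1$ everywhere and
$$\psi_i:=\c{E}_{x_i,\delta}/\Sigma$$
is a continuous partition of unity on $\f{X}$ with $\mathrm{supp}(\psi_i)\subseteq\c{O}_{x_i,\delta}\subseteq\c{B}_{x_i,2\delta}$. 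Put $c_i:=\int\psi_i\,\r{d}\f{m}$; this is strictly positive because $\psi_i\ge 0$ is continuous and strictly positive on the nonempty open ball $\c{B}_{x_i,\delta}$, and here the standing assumption $\r{Sp}\f{m}=\f{X}$ enters. Finally set
$$u_\delta(z,y):=\sum_{i=1}^{n}\frac{\psi_i(z)\,\psi_i(y)}{c_i}\in\c{M},$$
which is manifestly symmetric in its two arguments.

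\textbf{Verification.} For the cct convergence $a\star u_\delta\to a$, fix $a\in\c{M}$, $y\in\f{X}$, and $\varepsilon>0$, and use uniform continuity of $a$ on $\f{X}^2$ to choose $\eta>0$ such that $|a(x,z)-a(x,y)|<\varepsilon$ for every $x\in\f{X}$ whenever $\f{d}(z,y)<\eta$. When $4\delta<\eta$, the identity $\sum_i\psi_i(y)=1$ together with $\int\psi_i\,\r{d}\f{m}/c_i=1$ lets one express
$$(a\star u_\delta)(x,y)-a(x,y)=\sum_i\psi_i(y)\,\frac{1}{c_i}\int\psi_i(z)\bigl[a(x,z)-a(x,y)\bigr]\,\r{d}\f{m}(z);$$
in each surviving summand both $y$ and $z\in\mathrm{supp}(\psi_i)$ lie in $\c{B}_{x_i,2\delta}$, so $\f{d}(z,y)<4\delta<\eta$, the bracketed difference is $<\varepsilon$, and the bound is uniform in $x$. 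By the symmetry $u_\delta(z,y)=u_\delta(y,z)$ the identical computation with variables interchanged gives $u_\delta\star b\xrightarrow{rct}b$ for every $b\in\c{M}$, and the pct-assertion then follows from $\r{pct}\subseteq\r{cct}\cap\r{rct}$.

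\textbf{Main obstacle.} The only conceptual step is insisting on a symmetric kernel so that one net serves both cct and rct; once this is arranged, the two one-sided estimates are twins of each other and reduce to uniform continuity of $a$. The one technical subtlety is the positivity of the normalizing constants $c_i$---this is precisely the point at which the hypothesis $\r{Sp}\f{m}=\f{X}$ intervenes, since without it a partition-of-unity element could be supported on an $\f{m}$-null region and the rescaling defining $u_\delta$ would break down.
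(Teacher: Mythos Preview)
Your argument is correct, and in fact it proves more than Theorem~\ref{TT1} asserts. The paper's proof indexes the net by pairs $(S,\epsilon)$ with $S\subseteq\f{X}$ finite, and builds $u_{S,\epsilon}$ as a sum of \emph{disjoint} bumps $\frac{1}{\f{m}(\c{B}_{y,\delta})}\c{E}_{y,\delta}\otimes\c{E}_{y,\delta}$ over $y\in S$; the estimate $|f\star u_{S,\epsilon}-f|(x,y)\le r(1+\|f\|_\infty)$ is then available only at those columns $y$ that happen to lie in $S$, which is exactly what cct demands. Your construction replaces the disjoint bumps by a genuine partition of unity $\{\psi_i\}$, so the identity $\sum_i\psi_i(y)=1$ holds for \emph{every} $y\in\f{X}$, and the telescoping line
\[
(a\star u_\delta)(x,y)-a(x,y)=\sum_i\psi_i(y)\,\frac{1}{c_i}\int\psi_i(z)\bigl[a(x,z)-a(x,y)\bigr]\,\r{d}\f{m}(z)
\]
is bounded by $\varepsilon$ uniformly in $x$ \emph{and} in $y$. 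In other words, you have constructed a two-sided \emph{norm} approximate unit (and taking $\delta=1/n$ gives a sequence), under nothing more than the standing hypothesis $\r{Sp}\f{m}=\f{X}$. This is strictly stronger than the paper's Theorem~\ref{TT1} and, incidentally, subsumes Theorems~\ref{TT3} and~\ref{TT4} without invoking conditions (C1) or (C2). The price the paper pays for using disjoint bumps is that the columns must be handled one at a time; the partition-of-unity idea sidesteps this entirely.
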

\begin{proof}
The set of all pairs $(S,\epsilon)$, in which $S$ is a finite subset of $\f{X}$ and $\epsilon>0$, with the
ordering $((S,\epsilon)\leq(S',\epsilon'))\Leftrightarrow(S\subseteq S',\epsilon'\leq\epsilon)$, becomes a directed set.
For any pair $(S,\epsilon)$ choose $\delta>0$ such that $\delta<\epsilon$ and
$\c{B}_{y,2\delta}\cap\c{B}_{y',2\delta}=\emptyset$ for $y,y'\in S$ with $y\neq y'$, and let
$u_{S,\epsilon}=\sum_{y\in S}\frac{1}{\f{m}(\c{B}_{y,\delta})}\c{E}_{y,\delta}\ot\c{E}_{y,\delta}$.
We show that $(u_{S,\epsilon})_{(S,\epsilon)}$ is the desired net.
Let $f\in\c{M}$ and $r>0$ be arbitrary. Choose $\epsilon>0$ with $\epsilon<r$ such that for every $z,z',x$ if $\f{d}(z,z')<\epsilon$ then
$|f(x,z)-f(x,z')|<r$. If $x$ is arbitrary then for any pair $(S,\epsilon)$ with $y\in S$ we have
\begin{align*}
|f\star u_{S,\epsilon}-f|(x,y)
&=\frac{1}{\f{m}(\c{B}_{y,\delta})}|\int_{\c{B}_{y,\delta}}[f(x,z)-f(x,y)]\r{d}\f{m}(z)
+\int_{\c{O}_{x,\delta}\setminus\c{B}_{x,\delta}}f(x,z)\c{E}_{y,\delta}(z)\r{d}\f{m}(z)|\\
&\leq r+r\|f\|_\infty.
\end{align*}
This shows that $f\star u_{S,\epsilon}\to f$ in cct. Similarly it is proved that $u_{S,\epsilon}\star f\to f$ in rct.
\end{proof}
\begin{remark}\label{RR1}
The existence of a right (or left) pc-approximate unit for $\c{M}$ implies that $\r{Sp}\f{m}=\f{X}$.
An easy proof is as follows. Let $(u_\lambda)_{\lambda}$ be a right pc-approximate unit. Let $U$ be an arbitrary
nonempty open set in $\f{X}$ and let $f\in\b{C}(\f{X})$ be such that $f(\f{X}\setminus U)=\{0\}$ and $f(x)=1$ for some $x\in U$. Then we have
$1=(1_\f{X}\ot f)(x,x)=\lim_\lambda [(1_\f{X}\ot f)\star u_\lambda](x,x)=\lim_\lambda\int_Uf(z)u_\lambda(z,x)\mathrm{d}\f{m}(z)$.
This implies that $\f{m}(U)\neq0$. Hence $\r{Sp}\f{m}=\f{X}$.
\end{remark}
\begin{proposition}\label{PP1}
If $\c{M}$ has a bounded right (or left) pc-approximate unit then $\f{X}$ is finite.
\end{proposition}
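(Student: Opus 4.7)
Let $M:=\sup_\lambda\|u_\lambda\|_\infty<\infty$. The plan is to test the pc-approximate unit property on the simplest possible elements of $\c{M}$, namely the pure tensors $1_\f{X}\ot g$ with $g\in\b{C}(\f{X})$, so as to strip the $x$-dependence and reduce to a statement about weak$^*$ convergence of densities on each column. Explicitly, since $((1_\f{X}\ot g)\star u_\lambda)(x,y)=\int_\f{X}g(z)u_\lambda(z,y)\,\r{d}\f{m}(z)$ and the pc-approximate unit hypothesis forces this to converge pointwise to $(1_\f{X}\ot g)(x,y)=g(y)$, one obtains for every fixed $y\in\f{X}$ the convergence
\[
\int_\f{X}g(z)u_\lambda(z,y)\,\r{d}\f{m}(z)\longrightarrow g(y)\qquad\text{for all }g\in\b{C}(\f{X}).
\]

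The second step is a standard bump-function estimate. Fix $y\in\f{X}$ and $r>0$. Using that $\f{X}$ is metrizable, pick a continuous $g:\f{X}\to[0,1]$ with $g(y)=1$ and $g\equiv0$ outside $\c{B}_{y,r}$. On the one hand the previous display gives $|\int g(z)u_\lambda(z,y)\,\r{d}\f{m}(z)|\to 1$. On the other hand, since $g$ is supported in $\c{B}_{y,r}$ and $\|u_\lambda\|_\infty\leq M$,
\[
\Bigl|\int_\f{X}g(z)u_\lambda(z,y)\,\r{d}\f{m}(z)\Bigr|\;\leq\;M\,\f{m}(\c{B}_{y,r}).
\]
Passing to the limit yields $\f{m}(\c{B}_{y,r})\geq 1/M$ for every $y\in\f{X}$ and every $r>0$. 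Continuity of measure along $r\downarrow 0$ then gives $\f{m}(\{y\})\geq 1/M$ for every $y\in\f{X}$. Since $\f{m}$ is a probability measure, $\f{X}$ can contain at most $M$ points, and in particular is finite.

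The ``left'' case is handled symmetrically: apply $u_\lambda\star f\to f$ with $f=g\ot 1_\f{X}$ to obtain $\int u_\lambda(x,z)g(z)\,\r{d}\f{m}(z)\to g(x)$, then repeat the bump argument centered at $x$. No separate argument is needed for the two-sided case.

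I do not expect any serious obstacle: the only conceptual point is the choice of test element $1_\f{X}\ot g$, which decouples the two variables and converts the algebraic pc-convergence into an approximation-of-identity statement for the measures $u_\lambda(\cdot,y)\,\r{d}\f{m}$. Once this reduction is made, the boundedness hypothesis does the rest through a one-line estimate. (Note that the hypothesis $\r{Sp}\f{m}=\f{X}$ is not needed here, though it would anyway be automatic by Remark~\ref{RR1}.)
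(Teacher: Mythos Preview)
Your argument is correct and uses the same key ingredient as the paper---testing the approximate unit against $1_\f{X}\ot g$ with a bump function $g$ supported near a point---but your execution is tighter. The paper first uses this bump estimate only qualitatively, to show by contradiction that $\f{m}(\{x\})>0$ for every $x$; it then deduces that $\f{X}$ is countable, picks an infinite discrete subset $\{x_n\}$, and runs a \emph{second} test with the indicator-type functions $f_n=\chi_{\{(x_n,x_n)\}}$ to extract the uniform bound $\f{m}\{x_n\}\geq 1/M$, reaching a contradiction with $\f{m}\{x_n\}\to0$. You bypass this entire second stage by observing that the bump estimate already gives $\f{m}(\c{B}_{y,r})\geq 1/M$ uniformly in $y$ and $r$, so letting $r\downarrow0$ yields $\f{m}(\{y\})\geq 1/M$ directly and the finiteness (indeed, $|\f{X}|\leq M$) follows at once. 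Your route is shorter and avoids the mild technical point of ensuring the $f_n$ are continuous.
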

\begin{proof}
Let $(u_\lambda)_{\lambda}$ be a right pc-approximate unit for $\c{M}$ bounded by $M>0$.
First of all we show that $\f{m}(\{x\})\neq0$ for every $x$. Assume, to get a contradiction, that $\f{m}(\{x\})=0$
for some $x$. Let $\epsilon>0$ be such that $\epsilon M<1/2$. There is an open neighborhood $U$ of $x$ with $\f{m}(U)<\epsilon$.
Let $f:\f{X}\to[0,1]$ be a continuous function with $f(x)=1$ and $f(\f{X}\setminus U)=\{0\}$. For every $\lambda$ we have
$|(1_\f{X}\ot f)\star u_\lambda|(x,x)\leq\int_U|f(z)u_\lambda(z,x)|\mathrm{d}\f{m}(z)\leq\epsilon M<1/2$.
But this is impossible because $[(1_\f{X}\ot f)\star u_\lambda](x,x)\to1$.

Now, since $\f{m}(\f{X})=1$, it is concluded that $\f{X}$ must be a countable space. Suppose that $\f{X}$ is not finite.
Then there is an infinite discrete subset $\{x_1,x_2,\cdots\}$ of $\f{X}$. For every $n$ let $f_n\in\c{M}$ be defined by
$f_n(z,z')=1$ if $z=z'=x_n$ and otherwise $f_n(z,z')=0$. Then we have
$1=f_n(x_n,x_n)=\lim_{\lambda}(f_n\star u_\lambda)(x_n,x_n)=\lim_\lambda\f{m}\{x_n\}u_\lambda(x_n,x_n)$.
It follows that $\f{m}\{x_n\}\geq 1/M$. But this contradicts $\lim_{n\to\infty}\f{m}\{x_n\}=0$. Hence, $\f{X}$ is finite.
\end{proof}
\begin{theorem}\label{TT2}
The following statements are equivalent.
\begin{enumerate}
\item[(a)] $\f{X}$ is finite.
\item[(b)] $\c{M}$ has a bounded right (or left) pc-approximate unit.
\item[(c)] $\c{M}$ has a unit.
\end{enumerate}
\end{theorem}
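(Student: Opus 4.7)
The three implications needed are $(a)\Rightarrow(c)\Rightarrow(b)\Rightarrow(a)$, and two of them are essentially free. The plan is to use the explicit isomorphism with matrix algebras noted in Section~2 for $(a)\Rightarrow(c)$, observe that a unit is trivially a bounded pc-approximate unit for $(c)\Rightarrow(b)$, and invoke Proposition~\ref{PP1} for $(b)\Rightarrow(a)$.

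For $(a)\Rightarrow(c)$ I would argue as follows. Since $\mathfrak{X}$ is finite and $\mathrm{Sp}\,\mathfrak{m}=\mathfrak{X}$, we have $\mathfrak{m}\{x\}>0$ for every $x\in\mathfrak{X}$. Define $e\in\mathbf{C}(\mathfrak{X}^2)$ by $e(x,y)=\mathfrak{m}\{x\}^{-1}$ if $x=y$ and $e(x,y)=0$ otherwise; continuity is automatic because $\mathfrak{X}$ is discrete. A direct computation using that the integral against $\mathfrak{m}$ is a finite weighted sum gives $(e\star f)(x,y)=\mathfrak{m}\{x\}^{-1}f(x,y)\mathfrak{m}\{x\}=f(x,y)$ and symmetrically $(f\star e)(x,y)=f(x,y)$, so $e$ is a two-sided unit. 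Alternatively, one may appeal to the explicit $*$-algebra isomorphism between $\mathcal{M}_{\mathfrak{X},\mathfrak{m}}$ and the algebra of $n\times n$ matrices recorded in Section~2, under which the identity matrix corresponds to $e$.

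For $(c)\Rightarrow(b)$, if $e$ is a unit, then the constant net $(u_\lambda)=(e)$ satisfies $au_\lambda=a$ for every $a\in\mathcal{M}$, hence in particular converges to $a$ in pct; it is bounded by $\|e\|_\infty$. For $(b)\Rightarrow(a)$, this is precisely the content of Proposition~\ref{PP1}, which is already proved.

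The main obstacle, such as it is, lies in $(a)\Rightarrow(c)$, and it is minor: one needs only to verify that the candidate unit $e$ really lives in $\mathbf{C}(\mathfrak{X}^2)$ (trivial, since $\mathfrak{X}$ is finite hence discrete) and that it acts as an identity with respect to the $\mathfrak{m}$-weighted convolution (a one-line calculation turning the integral into a single-term sum). No compactness or approximation argument is required for any of the three implications, so beyond citing Proposition~\ref{PP1} the proof is essentially a bookkeeping exercise.
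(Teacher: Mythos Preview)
Your proposal is correct and follows exactly the same route as the paper: the paper also deduces $(b)\Rightarrow(a)$ from Proposition~\ref{PP1}, calls $(c)\Rightarrow(b)$ trivial, and refers $(a)\Rightarrow(c)$ to the analogy with ordinary matrix algebras. Your explicit formula for the unit $e$ simply spells out what the paper leaves implicit.
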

\begin{proof}
(b)$\Rightarrow$(a) is the statement of Proposition \ref{PP1}. (c)$\Rightarrow$(b) is trivial. (a)$\Rightarrow$(c) is easily verified
by analogy with ordinary matrix algebras.
\end{proof}
\begin{lemma}\label{LL1}
Let $x\in X$. The function $r\mapsto\f{m}(\c{B}_{x,r})$ is continuous at $r_0\in[0,\infty)$ if and only if $\f{m}\{y:\f{d}(x,y)=r_0\}=0$.
(Note that $\c{B}_{x,0}=\emptyset$.)
\end{lemma}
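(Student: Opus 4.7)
The plan is to use the standard monotone-class style continuity of the measure $\f{m}$ from above and below, applied to the nested families of open balls indexed by $r$. The key observations are that $r\mapsto g(r):=\f{m}(\c{B}_{x,r})$ is monotone nondecreasing, that $\c{B}_{x,r_0}=\bigcup_{r<r_0}\c{B}_{x,r}$, and that $\bigcap_{r>r_0}\c{B}_{x,r}=\overline{\c{B}}_{x,r_0}=\{y:\f{d}(x,y)\leq r_0\}$. Since $\f{m}$ is a finite measure, these two set-theoretic identities yield $\lim_{r\to r_0^-}g(r)=\f{m}(\c{B}_{x,r_0})=g(r_0)$ and $\lim_{r\to r_0^+}g(r)=\f{m}(\overline{\c{B}}_{x,r_0})$.

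First I would record the left-continuity step: $g$ is automatically left-continuous at every $r_0>0$ because the union formula above together with continuity of $\f{m}$ from below forces the left limit to equal $g(r_0)$. For $r_0=0$ the domain is $[0,\infty)$ so left-continuity is vacuous; continuity then amounts to right-continuity alone. Next I would compute the right limit via the intersection formula and continuity of $\f{m}$ from above, giving $\lim_{r\to r_0^+}g(r)=\f{m}(\overline{\c{B}}_{x,r_0})$.

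Combining these, continuity of $g$ at $r_0$ is equivalent to the equality $\f{m}(\overline{\c{B}}_{x,r_0})=\f{m}(\c{B}_{x,r_0})$, which by additivity is equivalent to $\f{m}(\overline{\c{B}}_{x,r_0}\setminus\c{B}_{x,r_0})=\f{m}\{y:\f{d}(x,y)=r_0\}=0$. The edge case $r_0=0$ is consistent because $\c{B}_{x,0}=\emptyset$, $\overline{\c{B}}_{x,0}=\{x\}$, and $\{y:\f{d}(x,y)=0\}=\{x\}$, so the stated condition reduces to $\f{m}\{x\}=0$, which is exactly what right-continuity at $0$ requires.

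There is no real obstacle here; the only point to be careful about is the direction of the intersection (the intersection of open balls of strictly larger radii is the closed ball, not the open one), since this is what produces the sphere $\{y:\f{d}(x,y)=r_0\}$ as the set whose measure controls the jump of $g$ at $r_0$.
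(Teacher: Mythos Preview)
Your argument is correct and is precisely the ``straightforward'' computation the paper has in mind; the paper itself gives no details beyond that word. Your handling of the monotone limits and the $r_0=0$ edge case is accurate.
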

\begin{proof}
Straightforward.
\end{proof}
\begin{lemma}\label{LL2}
The function $x\mapsto\f{m}(\c{B}_{x,r})$ is continuous at $x_0$ if $\f{m}\{y:\f{d}(x_0,y)=r\}=0$.
\end{lemma}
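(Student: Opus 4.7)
The plan is to use a dominated (or bounded) convergence argument applied to the indicator functions of the balls. Since $\f{X}$ is metrizable, continuity at $x_0$ is the same as sequential continuity, so I would fix an arbitrary sequence $x_n\to x_0$ and show that $\f{m}(\c{B}_{x_n,r})\to\f{m}(\c{B}_{x_0,r})$; rewriting these measures as integrals,
\begin{equation*}
\f{m}(\c{B}_{x_n,r})=\int_{\f{X}}1_{\c{B}_{x_n,r}}(y)\,\r{d}\f{m}(y),
\end{equation*}
this will follow from pointwise convergence of the integrands $\f{m}$-almost everywhere together with the uniform bound $1_{\f{X}}$.

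The first step is to check pointwise convergence off the sphere $S:=\{y:\f{d}(x_0,y)=r\}$. For any $y\notin S$ we have either $\f{d}(x_0,y)<r$ or $\f{d}(x_0,y)>r$; in the first case the triangle inequality gives $\f{d}(x_n,y)<r$ for all sufficiently large $n$, so that $1_{\c{B}_{x_n,r}}(y)=1=1_{\c{B}_{x_0,r}}(y)$ eventually, and symmetrically in the second case both indicators eventually equal $0$. Hence $1_{\c{B}_{x_n,r}}(y)\to 1_{\c{B}_{x_0,r}}(y)$ for every $y\in\f{X}\setminus S$.

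The second step invokes the hypothesis: by assumption $\f{m}(S)=0$, so the pointwise convergence above holds $\f{m}$-a.e. Applying the dominated convergence theorem with dominating function $1_\f{X}$ (which is integrable because $\f{m}$ is a probability measure) yields $\f{m}(\c{B}_{x_n,r})\to\f{m}(\c{B}_{x_0,r})$, which is exactly sequential continuity at $x_0$.

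I do not anticipate any real obstacle here; the only subtle point is the behaviour of the indicator on the sphere $S$, which is precisely what the hypothesis neutralises, and everything else is routine. The lemma could alternatively be proved by noting $\c{B}_{x_n,r}\triangle\c{B}_{x_0,r}$ is eventually contained in any open neighbourhood of $S$ together with outer regularity, but the DCT formulation above is the cleanest.
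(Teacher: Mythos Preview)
Your argument is correct: the pointwise convergence of the indicators off the sphere is straightforward from the triangle inequality, and the dominated convergence theorem finishes the job since $\f{m}$ is finite. The paper, however, takes a different route. Rather than invoking DCT, it appeals to the previously established Lemma~\ref{LL1} (continuity of $r\mapsto\f{m}(\c{B}_{x_0,r})$ at $r$ under the same sphere-null hypothesis) to find, for any $\epsilon>0$, a $\delta>0$ with $\f{m}(\c{B}_{x_0,r+\delta}\setminus\c{B}_{x_0,r-\delta})<\epsilon$; then for any $y\in\c{B}_{x_0,\delta}$ the triangle inequality gives the sandwich $\c{B}_{x_0,r-\delta}\subseteq\c{B}_{y,r}\subseteq\c{B}_{x_0,r+\delta}$, whence $|\f{m}(\c{B}_{y,r})-\f{m}(\c{B}_{x_0,r})|<\epsilon$. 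This is an explicit $\epsilon$--$\delta$ argument that stays entirely within elementary measure-of-balls manipulations and reuses Lemma~\ref{LL1}, whereas your approach is self-contained (it does not need Lemma~\ref{LL1}) and trades the explicit modulus of continuity for a sequential/DCT formulation. Your closing remark about the symmetric difference and outer regularity is in fact closer in spirit to what the paper does, with the annulus $\c{B}_{x_0,r+\delta}\setminus\c{B}_{x_0,r-\delta}$ playing the role of the neighbourhood of $S$.
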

\begin{proof}
For $\epsilon>0$ by Lemma \ref{LL1} there is $\delta>0$ such that $\f{m}(\c{B}_{x_0,r+\delta}\setminus\c{B}_{x_0,r-\delta})<\epsilon$.
Suppose that $y\in\c{B}_{x_0,\delta}$. Then
$\f{m}(\c{B}_{x_0,r-\delta})\leq\f{m}(\c{B}_{y,r})\leq\f{m}(\c{B}_{x_0,r+\delta})$. So $|\f{m}(\c{B}_{x_0,r})-\f{m}(\c{B}_{y,r})|<\epsilon$.
\end{proof}
\begin{lemma}\label{LL3}
Let $\delta>0$ be such that $\f{m}\{y:\f{d}(x,y)=\delta\}=0$ for every $x\in\f{X}$.
Then there exists $\delta'$ with $\delta<\delta'<2\delta$
such that $\f{m}(\c{B}_{x,\delta'}\setminus\c{B}_{x,\delta})<\delta\f{m}(\c{B}_{x,\delta})$ for every $x\in\f{X}$.
\end{lemma}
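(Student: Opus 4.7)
The plan is to argue by contradiction, using a compactness extraction in $\f{X}$ together with both preceding lemmas to transfer the failure of uniformity to a single point where it contradicts continuity of $r \mapsto \f{m}(\c{B}_{x,r})$ at $\delta$.

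First I would assume, toward a contradiction, that no such $\delta'$ exists. Then for every positive integer $n$ the number $\delta'_n := \delta + \min(1/n,\delta/2)$ lies in $(\delta,2\delta)$ and fails the required estimate, so there is a point $x_n\in\f{X}$ with
\[
\f{m}(\c{B}_{x_n,\delta'_n})\geq(1+\delta)\,\f{m}(\c{B}_{x_n,\delta}).
\]
Since $\f{X}$ is compact metrizable, I pass to a subsequence (still indexed by $n$) with $x_n\to x_\infty$ in $\f{X}$.

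Next I would handle the two sides of the displayed inequality separately. For the right side, the hypothesis $\f{m}\{y:\f{d}(x,y)=\delta\}=0$ holds at every $x$, so Lemma \ref{LL2} gives that $x\mapsto\f{m}(\c{B}_{x,\delta})$ is continuous on $\f{X}$; in particular $\f{m}(\c{B}_{x_n,\delta})\to\f{m}(\c{B}_{x_\infty,\delta})$. For the left side, I fix an arbitrary $\epsilon>0$ and observe that, by the triangle inequality, as soon as $n$ is large enough that $\f{d}(x_n,x_\infty)+\delta'_n<\delta+\epsilon$ one has the inclusion $\c{B}_{x_n,\delta'_n}\subseteq\c{B}_{x_\infty,\delta+\epsilon}$, whence $\f{m}(\c{B}_{x_n,\delta'_n})\leq\f{m}(\c{B}_{x_\infty,\delta+\epsilon})$. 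Letting $\epsilon\downarrow0$ and invoking Lemma \ref{LL1} at $x_\infty$ (which applies because $\f{m}\{y:\f{d}(x_\infty,y)=\delta\}=0$) yields $\limsup_n\f{m}(\c{B}_{x_n,\delta'_n})\leq\f{m}(\c{B}_{x_\infty,\delta})$.

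Combining the two estimates gives
\[
\f{m}(\c{B}_{x_\infty,\delta})\;\geq\;(1+\delta)\,\f{m}(\c{B}_{x_\infty,\delta}),
\]
forcing $\f{m}(\c{B}_{x_\infty,\delta})=0$. But $\c{B}_{x_\infty,\delta}$ is a nonempty open set and $\r{Sp}\f{m}=\f{X}$, so its measure is strictly positive, a contradiction.

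The main obstacle is the uniformity in $x$: the per-point statement is an immediate consequence of Lemma \ref{LL1}, but a single $\delta'$ must serve every $x\in\f{X}$ simultaneously. I expect the compactness extraction above to be the cleanest route, since it is exactly what allows both lemmas — continuity in the radius at $x_\infty$ and continuity in the center at the fixed radius $\delta$ — to be applied together. The mild subtlety is using $\c{B}_{x_n,\delta'_n}\subseteq\c{B}_{x_\infty,\delta+\epsilon}$ rather than trying to compare the balls $\c{B}_{x_n,\delta'_n}$ and $\c{B}_{x_\infty,\delta}$ directly, since Lemma \ref{LL2} is only available at the single radius $\delta$.
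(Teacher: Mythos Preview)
Your proof is correct and follows essentially the same route as the paper's: argue by contradiction, pick a sequence of radii $\delta'_n\downarrow\delta$ with witnessing points $x_n$, extract a convergent subsequence by compactness, then combine the ball inclusion $\c{B}_{x_n,\delta'_n}\subseteq\c{B}_{x_\infty,\delta+\epsilon}$ with Lemma~\ref{LL2} (continuity in the center at radius $\delta$) and Lemma~\ref{LL1} (continuity in the radius at $\delta$) to force $\f{m}(\c{B}_{x_\infty,\delta})=0$, contradicting $\r{Sp}\f{m}=\f{X}$. The only cosmetic difference is that you use $\delta'_n=\delta+\min(1/n,\delta/2)$ from the outset, whereas the paper just takes $\delta+1/n$ for $n$ large enough that this lies below $2\delta$.
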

\begin{proof}
Assume, to reach a contradiction, that there is no $\delta'$ with the desired properties.
For sufficiently large $n$ we have $\delta+n^{-1}<2\delta$ and hence there
is a $x_n$ such that $\f{m}(\c{B}_{x_n,\delta+n^{-1}})-\f{m}(\c{B}_{x_n,\delta})\geq\delta\f{m}(\c{B}_{x_n,\delta})$.
Without lost of generality we can suppose that the sequence $(x_n)_n$ converges to an element $x$.
Let $r>0$ be arbitrary. For sufficiently large $n$ we have $\f{m}(\c{B}_{x_n,\delta+n^{-1}})\leq\f{m}(\c{B}_{x,\delta+r})$ and hence
$\delta\f{m}(\c{B}_{x_n,\delta})\leq\f{m}(\c{B}_{x,\delta+r})-\f{m}(\c{B}_{x_n,\delta})$. It follows from Lemma \ref{LL2} that
$\delta\f{m}(\c{B}_{x,\delta})\leq\f{m}(\c{B}_{x,\delta+r}\setminus\c{B}_{x,\delta})$. Letting $r\to0$ and using Lemma \ref{LL1}
we conclude that $\f{m}(\c{B}_{x,\delta})=0$, a contradiction.
\end{proof}
\begin{theorem}\label{TT3}
Suppose that the following condition is satisfied. \emph{(C1) $\f{X}$ has a compatible metric $\f{d}$ under which
there is a decreasing sequence $(\delta_n)_n$ of strictly positive numbers such that
$\inf_n\delta_n=0$ and $\f{m}\{y:\f{d}(x,y)=\delta_n\}=0$ for every $n$ and every $x\in\f{X}$.}
Then $\c{M}$ has a right (resp. left) norm-approximate unit. Moreover, that approximate unit can be chosen so as to be a sequence.
\end{theorem}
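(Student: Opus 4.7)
I would follow the construction of Theorem \ref{TT1}, but replace the disjoint-ball configuration with a \emph{covering} of $\f{X}$ by $\delta_n$-balls, glued by a continuous partition of unity in the column variable. The disjoint setup of Theorem \ref{TT1} cannot yield norm convergence: a point $y$ lying outside every $\c{O}_{y',\delta}$ contributes the full error $|f(x,y)|$, which need not be small. A covering eliminates this defect at the price of overlapping balls whose local averages must be convex-combined.

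First, for each $n$ I would apply Lemma \ref{LL3} to $\delta_n$---legitimate by (C1)---to obtain $\delta'_n\in(\delta_n,2\delta_n)$ with $\f{m}(\c{B}_{x,\delta'_n}\setminus\c{B}_{x,\delta_n})<\delta_n\f{m}(\c{B}_{x,\delta_n})$ for \emph{every} $x\in\f{X}$, and set $\c{O}_{x,\delta_n}:=\c{B}_{x,\delta'_n}$; the uniformity in $x$ is exactly what (C1) affords. Compactness of $\f{X}$ then yields a finite $\delta_n$-net $S_n\subseteq\f{X}$, so that $\sum_{y\in S_n}\c{E}_{y,\delta_n}\geq 1$ pointwise on $\f{X}$. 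Define the associated continuous partition of unity $\psi_{y,n}:=\c{E}_{y,\delta_n}\big/\sum_{y'\in S_n}\c{E}_{y',\delta_n}\in\b{C}(\f{X})$ and set
$$u_n:=\sum_{y\in S_n}\frac{1}{\f{m}(\c{B}_{y,\delta_n})}\,\c{E}_{y,\delta_n}\ot\psi_{y,n}\in\c{M}.$$

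Fix $f\in\c{M}$ and $r>0$. Using $\sum_{y'\in S_n}\psi_{y',n}(y)=1$,
$$(f\star u_n-f)(x,y)=\sum_{y'\in S_n}\psi_{y',n}(y)\Bigl[\frac{1}{\f{m}(\c{B}_{y',\delta_n})}\int f(x,z)\c{E}_{y',\delta_n}(z)\,\r{d}\f{m}(z)-f(x,y)\Bigr].$$
Whenever $\psi_{y',n}(y)\neq 0$ and $\c{E}_{y',\delta_n}(z)\neq 0$, both $y,z\in\c{B}_{y',2\delta_n}$, hence $\f{d}(y,z)<4\delta_n$. By uniform continuity of $f$ on the compact space $\f{X}^2$, for $n$ sufficiently large one has $|f(x,z)-f(x,y)|<r$ uniformly in $x$, and also $\delta_n\|f\|_\infty<r$. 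Splitting the integrand as $[f(x,z)-f(x,y)]+f(x,y)$ and invoking the Lemma \ref{LL3} bound $\f{m}(\c{O}_{y',\delta_n})\leq(1+\delta_n)\f{m}(\c{B}_{y',\delta_n})$, each bracketed quantity is $<r(1+\delta_n)+\|f\|_\infty\delta_n<3r$, uniformly in $x,y,y'$. Thus $\|f\star u_n-f\|_\infty<3r$, establishing $(u_n)_n$ as a right norm-approximate unit. The left-sided assertion is symmetric, obtained by swapping the two tensor factors.

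The main obstacle to anticipate is not the computation but identifying why (C1) is exactly the required hypothesis: Lemmas \ref{LL1}--\ref{LL2} yield only pointwise-in-$x$ regularity of $x\mapsto\f{m}(\c{B}_{x,r})$, whereas Lemma \ref{LL3} upgrades this to a \emph{uniform} shell estimate, and it is precisely this uniformity that promotes the pointwise error bound of Theorem \ref{TT1} to a supremum-norm bound. The secondary technical point is that the partition of unity $\psi_{y,n}$ must be genuinely continuous, which is why one needs the $\delta_n$-balls to \emph{cover} $\f{X}$ (so the denominator is bounded below by $1$).
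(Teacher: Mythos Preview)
Your argument is correct, but the paper takes a noticeably different and shorter route. Instead of choosing a finite $\delta_n$-net and gluing local averages via a partition of unity, the paper works directly on $\f{X}^2$: for each $n$ it picks a single continuous bump $E_n:\f{X}^2\to[0,1]$ equal to $1$ on $\{\f{d}(x,y)\leq\delta_n\}$ and $0$ outside $\{\f{d}(x,y)<\delta'_n\}$, then sets $\c{E}_n(x,y):=E_n(x,y)/\f{m}(\c{B}_{y,\delta_n})$ (and $\c{E}'_n(x,y):=E_n(x,y)/\f{m}(\c{B}_{x,\delta_n})$ for the left version). The key point there is that (C1) together with Lemma~\ref{LL2} makes $y\mapsto\f{m}(\c{B}_{y,\delta_n})$ \emph{continuous}, so $\c{E}_n\in\c{M}$; the norm estimate $\|f\star\c{E}_n-f\|_\infty\to0$ then follows from the same shell bound of Lemma~\ref{LL3} you invoke. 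Thus both proofs rest on Lemma~\ref{LL3} for the uniform error control, but they diverge on how to produce a continuous approximate identity: the paper normalizes by the continuous ball-measure function (using Lemma~\ref{LL2}), whereas you sidestep Lemma~\ref{LL2} entirely by discretizing with a finite net and a partition of unity, at the cost of a slightly more elaborate construction and error decomposition. Your approach has the mild advantage that each $u_n$ is visibly of finite rank in $\b{C}(\f{X})\check{\ot}\b{C}(\f{X})$; the paper's has the advantage of a one-line definition and a cleaner link to Theorem~\ref{TT4}, where $\c{E}_n=\c{E}'_n$ immediately gives a two-sided unit.
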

\begin{proof}
For every $n$ let $\delta'_n$ be such that the statement of Lemma \ref{LL3} is satisfied with $\delta,\delta'$ replaced by $\delta_n,\delta'_n$.
Let $K_n=\{(x,y):\f{d}(x,y)\leq\delta_n\}$ and $U_n=\{(x,y):\f{d}(x,y)<\delta'_n\}$. Choose a continuous function $E_n:\f{X}^2\to[0,1]$
such that $E_n(K_n)=\{1\}$ and $E_n(\f{X}^2\setminus U_n)=\{0\}$ and let $\c{E}_n$ (resp. $\c{E}'_n$) be defined by
$(x,y)\mapsto E_n(x,y)/\f{m}(\c{B}_{y,\delta_n})$ (resp. $(x,y)\mapsto E_n(x,y)/\f{m}(\c{B}_{x,\delta_n})$).
(Note that by Lemma \ref{LL2}, $\c{E}_n,\c{E}_n'\in\c{M}$.). Using Lemma \ref{LL3}, it is easily verified that
$(\c{E}_n)_n$ (resp. $(\c{E}_n')_n$) is a right (resp. left) norm-approximate unite for $\c{M}$.
\end{proof}
\begin{theorem}\label{TT4}
Suppose that the following condition is satisfied. \emph{(C2) $\f{X}$ has a compatible metric $\f{d}$ under which
there exists a sequence $(\delta_n)_n$ satisfying all properties stated in (C1) and,
in addition, $\f{m}(\c{B}_{x,\delta_n})=\f{m}(\c{B}_{y,\delta_n})$ for every $n$ and every $x,y\in\f{X}$.}
Then $\c{M}$ has a two-sided norm-approximate unit.
\end{theorem}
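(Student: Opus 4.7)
The plan is to exploit the observation that under the additional symmetry in (C2), the two auxiliary sequences $(\c{E}_n)_n$ and $(\c{E}'_n)_n$ built in the proof of Theorem \ref{TT3} actually coincide, so that the same sequence serves simultaneously as a right and as a left norm-approximate unit.

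More concretely, I would repeat the construction from Theorem \ref{TT3} verbatim: pick $\delta'_n$ via Lemma \ref{LL3}, define $K_n$, $U_n$, and the Urysohn-type function $E_n:\f{X}^2\to[0,1]$ with $E_n(K_n)=\{1\}$ and $E_n(\f{X}^2\setminus U_n)=\{0\}$. Then set
\[
\c{E}_n(x,y)=\frac{E_n(x,y)}{\f{m}(\c{B}_{y,\delta_n})}=\frac{E_n(x,y)}{\f{m}(\c{B}_{x,\delta_n})}=\c{E}'_n(x,y),
\]
where the middle equality is exactly the content of (C2). By Theorem \ref{TT3}, $(\c{E}_n)_n$ is a right norm-approximate unit and, viewed as $(\c{E}'_n)_n$, it is also a left norm-approximate unit. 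Consequently $f\star\c{E}_n\to f$ and $\c{E}_n\star f\to f$ in the norm topology for every $f\in\c{M}$, which is exactly the definition of a two-sided norm-approximate unit.

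Since all the analytic work (uniform estimates, integrability against the normalizing factor, the selection of $\delta'_n$) has already been carried out in the proof of Theorem \ref{TT3}, there is no substantive obstacle here; the only point to verify is that condition (C2) makes the two normalizations agree, and this is immediate from the hypothesis that $\f{m}(\c{B}_{x,\delta_n})$ is independent of $x$. Thus the proof reduces to a one-line observation on top of Theorem \ref{TT3}.
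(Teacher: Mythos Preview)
Your proposal is correct and matches the paper's own proof essentially verbatim: the paper simply notes that under (C2) one has $\c{E}_n=\c{E}'_n$ for the sequences constructed in Theorem~\ref{TT3}, which is exactly your observation. There is nothing to add.
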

\begin{proof}
It is concluded from $\c{E}_n=\c{E}_n'$ where $\c{E}_n,\c{E}_n'$ are as in the proof of Theorem \ref{TT3}.
\end{proof}
\begin{example}\label{EE1}
If $\f{X}$ is the closure of a nonempty bounded open subset of $\mathbb{R}^n$ with the normalized $n$-dimensional Lebesgue measure and with
the Euclidean metric, then $\f{X}$ satisfies conditions of Theorem \ref{TT3}. More generally, if an open subset of a Riemannian manifold has compact
closure $\f{X}$ then $\f{X}$, with the geodesic distance $\f{d}$ and normalized Riemannian volume $\f{m}$, satisfies conditions of Theorem \ref{TT3}.
Indeed, $\f{m}\{y:\f{d}(x,y)=r\}=0$ for every $r$ and $x$.
\end{example}
\begin{example}\label{EE2}
Any closed Riemannian manifold $\f{X}$ which has constant (positive) sectional curvature
(e.g. standard spheres and tori, compact Lie groups with invariant Riemannian metrics),
with geodesic distance $\f{d}$ and normalized Riemannian volume $\f{m}$, satisfies conditions of Theorem \ref{TT4}.
Indeed, in addition to the property mentioned in Example \ref{EE1}, we have $\f{m}(\c{B}_{x,r})=\f{m}(\c{B}_{y,r})$ for every $r,x,y$.
\end{example}
\begin{example}\label{EE3}
Let $\f{X}$ be a second countable compact Hausdorff group. It is well-known that $\f{X}$ has a compatible bi-invariant metric $\f{d}$
i.e. $\f{d}(zxz',zyz')=\f{d}(x,y)$ for every $x,y,z,z'\in\f{X}$ (see \cite{Klee1} or \cite[Corollary A4.19]{HofmannMorris1}).
We show that $\f{d}$ with the normalized Haar measure $\f{m}$ satisfies (C1) and hence (because of invariant property of $\f{m}$) satisfies (C2):
Suppose, on the contrary, that there is no sequence $(\delta_n)_n$ satisfying (C1) for $\f{d}$. So there must be $\epsilon>0$ such that
$\f{m}\{y:\f{d}(e,y)=r\}\neq 0$ for every nonzero $r<\epsilon$; thus $\f{m}(\c{B}_{e,\epsilon})=\infty$, a contradiction.
\end{example}
\section{The center of $\c{M}$}
It is clear that if $\f{X}$ is finite then the center of $\c{M}$ is the one-dimensional subalgebra of scaler multiples of the unit of $\c{M}$.
But in the infinite case the situation is different:
\begin{theorem}\label{TT5}
If $\f{X}$ is infinite then the center of $\c{M}$ is zero.
\end{theorem}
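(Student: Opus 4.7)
The plan is to show that any central $f\in\c{M}$ must vanish, by testing the commutation relation $f\star g=g\star f$ against the rank-one elements $g=h\ot k$ with $h,k\in\b{C}(\f{X})$ arbitrary. Computing both convolutions directly yields the pointwise identity
\begin{equation*}
h(x)\int k(z)f(z,y)\,\r{d}\f{m}(z)=k(y)\int h(z)f(x,z)\,\r{d}\f{m}(z),\qquad x,y\in\f{X}.
\end{equation*}

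Next I would specialize $h=1_\f{X}$: since the left side is then independent of $x$, the quantity $\int f(x,z)\,\r{d}\f{m}(z)$ must be a constant $c$ independent of $x$, and what remains is $\int k(z)f(z,y)\,\r{d}\f{m}(z)=ck(y)$ for every continuous $k$. By uniqueness of Riesz representation this identifies the Radon measure $f(z,y)\,\r{d}\f{m}(z)$ on $\f{X}$ with $c\delta_y$. The symmetric specialization $k=1_\f{X}$ produces, with the same constant $c$, the dual identity $f(x,z)\,\r{d}\f{m}(z)=c\delta_x$ as measures in $z$, for every fixed $x$.

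The proof then splits into two cases. If $c=0$, the identity says $\int h(z)f(x,z)\,\r{d}\f{m}(z)=0$ for all $h$ and $x$; combining this with $\r{Sp}\f{m}=\f{X}$ and continuity of $f$ gives $f\equiv 0$. The substantive case, which is where I expect the main obstacle, is $c\neq 0$: the delta-function identity forces $\f{m}$ to assign strictly positive mass to every singleton, and evaluating it at $\{x\}$ yields $f(x,x)=c/\f{m}(\{x\})$. Since $\f{X}$ is infinite and compact, it admits a convergent sequence $(x_n)$ of pairwise distinct points; their disjoint atomic masses are summable against the probability $\f{m}$, so $\f{m}(\{x_n\})\to 0$, and hence $|f(x_n,x_n)|\to\infty$, contradicting boundedness of $f\in\b{C}(\f{X}^2)$. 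This is exactly the step where the assumption that $\f{X}$ is infinite is indispensable, since in the finite case the unit of $\c{M}$ provided by Theorem~\ref{TT2} satisfies all the derived equations with $c=1$.
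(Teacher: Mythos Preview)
Your argument is correct and takes a genuinely different route from the paper. The paper tests the central element $f$ against the localized bump tensors $\frac{1}{\f{m}(\c{B}_{x,\delta})}\c{E}_{x,\delta}\ot\c{E}_{x,\delta}$ and $\frac{1}{\f{m}(\c{B}_{x,\delta})}\c{E}_{x,\delta}\ot\c{E}_{y,\delta}$ and lets $\delta\to 0$ to read off directly that $f(x,y)=0$ for $x\neq y$ and that $f(x,x)=f(y,y)$; the common diagonal value is then forced to zero by continuity along a sequence $x_n\to x_0$ with $x_n\neq x_0$. You instead test against \emph{all} rank-one tensors $h\ot k$ and use Riesz uniqueness to identify the measure $f(x,\cdot)\,\r{d}\f{m}$ with $c\delta_x$, after which the case $c\neq 0$ is eliminated by a summability-of-atoms argument rather than by continuity of $f$. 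Your approach is more conceptual and avoids the bump-function apparatus used elsewhere in the paper; the paper's approach is more elementary in that it never leaves pointwise estimates and does not invoke the Riesz representation theorem. One incidental remark: in your $c\neq 0$ case the convergence of $(x_n)$ is not actually used---pairwise distinctness alone gives $\sum_n\f{m}(\{x_n\})\leq 1$ and hence $\f{m}(\{x_n\})\to 0$, which already contradicts boundedness of $f$.
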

\begin{proof}
Suppose that $f$ is in the center of $\c{M}$. Let $x,y$ be arbitrary in $\f{X}$ with $x\neq y$, and $\delta>0$ be such that $\f{d}(x,y)>4\delta$.
Let $g:=\frac{1}{\f{m}(\c{B}_{x,\delta})}\c{E}_{x,\delta}\ot\c{E}_{x,\delta}$ and
$h_\delta:=\frac{1}{\f{m}(\c{B}_{x,\delta})}\c{E}_{x,\delta}\ot\c{E}_{y,\delta}$.
Then we have $f\star g(x,y)=0$ and hence $g\star f(x,y)=0$. We have,
$$|f|(x,y)=|g\star f-f|(x,y)\leq\frac{1}{\f{m}(\c{B}_{x,\delta})}\int_{\c{B}_{x,\delta}}|f(z,y)-f(x,y)|\r{d}\f{m}(z)+\delta\|f\|_\infty.$$
By this inequality and continuity of $f$ we conclude that $f(x,y)=0$. Also, a simple computation shows that
$\lim_{\delta\to0} f\star h_\delta(x,y)=f(x,x)$ and $\lim_{\delta\to0} h_\delta\star f(x,y)=f(y,y)$. Thus we have $f(x,x)=f(y,y)$.
Now, suppose that $\f{X}$ is infinite. Then there is a sequence $(x_n)_{n\geq0}$ such that $x_n\to x_0$ and $x_0\neq x_n$
for every $n\geq1$. Thus $f(x_0,x_0)=\lim_{n\to\infty}f(x_0,x_n)=0$ and hence $f(x,x)=f(x_0,x_0)=0$. This completes the proof.
\end{proof}
\section{The ideal structure of $\c{M}$}
It is clear that the involution $*$ induces a one-to-one correspondence between norm- (resp. rc-, cc-, pc-) closed right ideals and
norm- (resp. cc-, rc-, pc-) closed left ideals of $\c{M}$. Also any self-adjoint right or left ideal is a two-sided ideal.
The rc-closure of any right ideal is a right ideal and the cc-closure of any left ideal is a left ideal.
For any norm-closed linear subspace ${V}$ of $\b{C}(\f{X})$ we let
$\c{R}_{V}:=\{f\in\c{M}:f({\cdot},y)\in V\}$ and $\c{L}_{V}:=\{f\in\c{M}:f(x,{\cdot})\in V\}$.
It is clear that $\c{R}_{V}^*=\c{L}_{\bar{V}}$ and $\c{L}_{V}^*=\c{R}_{\bar{V}}$ where $\bar{V}:=\{\bar{f}:f\in V\}$.
\begin{theorem}\label{TT6}
$\c{R}_{V}$ (resp. $\c{L}_{V}$) is a cc-closed right (resp. rc-closed left) ideal in $\c{M}$. Moreover, if $V$ is pc-closed
then $\c{R}_V$ (resp. $\c{L}_{V}$) is pc-closed.
\end{theorem}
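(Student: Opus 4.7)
The plan is to prove the assertion for $\c{R}_V$ only; the corresponding statement for $\c{L}_V$ will then follow by applying the involution, since $\c{L}_V = (\c{R}_{\bar V})^*$, $V$ is norm- (resp.\ pc-) closed iff $\bar V$ is, and the involution is a homeomorphism from cct to rct.

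I would verify linearity of $\c{R}_V$ as obvious, and then address three points in order: the right-ideal property, cc-closedness, and pc-closedness under the extra assumption on $V$. For the right-ideal property, let $f\in\c{R}_V$ and $g\in\c{M}$. For fixed $y$, I want to show $(f\star g)(\cdot,y)\in V$. The key observation is that the map $z\mapsto g(z,y)f(\cdot,z)$ is continuous from $\f{X}$ into $\b{C}(\f{X})$ (uniform continuity of $f$ on $\f{X}^2$ makes $z\mapsto f(\cdot,z)$ continuous into $\b{C}(\f{X})$, and $g(\cdot,y)$ is scalar-valued continuous), and so the Bochner-type integral
\[
(f\star g)(\cdot,y) \;=\; \int_{\f{X}} g(z,y)\, f(\cdot,z)\,\r{d}\f{m}(z)
\]
is a norm limit in $\b{C}(\f{X})$ of Riemann-like sums $\sum_i \f{m}(A_i)\, g(z_i,y)\, f(\cdot,z_i)$ for partitions of $\f{X}$ into small Borel pieces $A_i$ with $z_i\in A_i$. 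Each such sum lies in $V$ since $f(\cdot,z_i)\in V$ and $V$ is a linear subspace; norm-closedness of $V$ then gives $(f\star g)(\cdot,y)\in V$, so $f\star g\in\c{R}_V$.

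For cc-closedness, take a net $(f_\lambda)$ in $\c{R}_V$ with $f_\lambda\xrightarrow{cct}f$. By definition of cct, $f_\lambda(\cdot,y)\to f(\cdot,y)$ in the norm of $\b{C}(\f{X})$ for every $y$. Since each $f_\lambda(\cdot,y)\in V$ and $V$ is norm-closed, $f(\cdot,y)\in V$ for every $y$, so $f\in\c{R}_V$. For pc-closedness under the assumption that $V$ is pc-closed in $\b{C}(\f{X})$, an analogous argument works: if $f_\lambda\to f$ pointwise on $\f{X}^2$, then for each $y$ the net $f_\lambda(\cdot,y)$ converges to $f(\cdot,y)$ pointwise on $\f{X}$, and pc-closedness of $V$ yields $f(\cdot,y)\in V$.

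The only step that is not essentially bookkeeping is the Riemann-sum approximation used in the right-ideal claim, since one must ensure that the integral defining $(f\star g)(\cdot,y)$ can genuinely be realized as a norm-limit in $\b{C}(\f{X})$ of elements of $V$; this is where I expect to spend a sentence justifying uniform continuity of $z\mapsto g(z,y)f(\cdot,z)$ and estimating the error of the corresponding finite sums. Everything else reduces to the definitions of cct and pct together with the closedness hypothesis on $V$.
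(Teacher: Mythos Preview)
Your proposal is correct and follows essentially the same route as the paper: linearity and cc-closedness are immediate from the definition, the $\c{L}_V$ case is obtained via the involution, and the right-ideal property comes from recognizing $(f\star g)(\cdot,y)$ as the $\b{C}(\f{X})$-valued integral of the continuous map $z\mapsto g(z,y)f(\cdot,z)$, which lands in the closed subspace $V$. The only cosmetic difference is that the paper cites a Bochner-integral result to place the integral in $V$, whereas you spell out the Riemann-sum approximation by hand; both arguments are the same in substance.
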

\begin{proof}
It is clear that $\c{R}_{V}$ is a cc-closed linear subspace of $\c{M}$. Let $f\in\c{R}_{V}$ and $g\in\c{M}$. For every
$y$ let $h_y:\f{X}\to{V}$ be defined by $h_y(z)=f({\cdot},z)g(z,y)$. Then the Bochner integral $\int_\f{X}h_y\r{d}\f{m}$ exists and belongs to
$V$ (\cite[Proposition 1.31]{HytonenNeervenVeraarWeis1}). Since $f\star g({\cdot},y)=\int_\f{X}h_y\r{d}\f{m}$, we have $f\star g\in\c{R}_V$.
Thus $\c{R}_V$ is a right ideal. Also, $\c{L}_{V}=\c{R}_{\bar{{V}}}^*$ is a rc-closed left ideal. The second part of the theorem
is trivial.
\end{proof}
\begin{theorem}\label{TT7}
Let ${R}$ be a norm-closed right ideal of $\c{M}$ and let ${V}=\{f({\cdot},y):f\in{R}, y\in\f{X}\}$.
Then ${V}$ is a norm-closed linear subspace of $\b{C}(\f{X})$ and the cc-closure of $R$ is equal to $\c{R}_{V}$.
Moreover, if $R$ is pc-closed then $V$ is pc-closed.
\end{theorem}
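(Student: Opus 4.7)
The plan is to reduce everything to a single \emph{key lemma}: for every $\phi \in V$ the function $\phi \ot 1_{\f{X}}$ lies in $R$. To prove the lemma, write $\phi = \psi(\cdot, z_0)$ with $\psi \in R$ and $z_0 \in \f{X}$, and consider the elements $\psi \star u_\delta$, which lie in $R$ because $R$ is a right ideal, where $u_\delta := \f{m}(\c{B}_{z_0,\delta})^{-1}\, \c{E}_{z_0,\delta} \ot 1_\f{X}$. A short computation using uniform continuity of $\psi$ on the compact space $\f{X}^2$, together with the defining properties of $\c{E}_{z_0,\delta}$ and $\c{O}_{z_0,\delta}$ (which force $\f{m}(\c{B}_{z_0,\delta})^{-1}\int\c{E}_{z_0,\delta}\,\r{d}\f{m}$ to lie between $1$ and $1+\delta$), will show that $\psi \star u_\delta \to \phi \ot 1_\f{X}$ in the supremum norm as $\delta \to 0$; norm-closure of $R$ then yields the lemma.

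With the key lemma in hand, the three claims about $V$ are bookkeeping. Linearity follows because $\phi_1 \ot 1 + \alpha\,\phi_2 \ot 1 = (\phi_1 + \alpha\phi_2)\ot 1 \in R$, and slicing at any $y$ recovers $\phi_1 + \alpha\phi_2 \in V$. For norm-closure, if $\phi_n \to \phi$ uniformly then $\phi_n \ot 1 \to \phi \ot 1$ uniformly, so $\phi \ot 1 \in R$ and $\phi \in V$. The ``moreover'' clause is identical with pct in place of norm, since $\phi_\lambda \to \phi$ pointwise on $\f{X}$ is equivalent to $\phi_\lambda \ot 1 \to \phi \ot 1$ pointwise on $\f{X}^2$.

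For the cc-closure identification, the inclusion $R \subseteq \c{R}_V$ is immediate, and $\c{R}_V$ is cc-closed by Theorem \ref{TT6}. For the reverse, given $g \in \c{R}_V$ I will build an explicit net in $R$ cc-converging to $g$. For each finite $Y \subseteq \f{X}$ pick $\delta_Y$ so small that the supports of the functions $\{\c{E}_{y,\delta_Y}\}_{y \in Y}$ are pairwise disjoint. Applying the key lemma to each slice $g(\cdot,y) \in V$, together with the identity $(\phi \ot 1) \star (1 \ot k) = \phi \ot k$ (which follows at once from $\f{m}(\f{X})=1$), each $g(\cdot,y) \ot \c{E}_{y,\delta_Y}$ belongs to $R$, and hence so does
\[
F_Y := \sum_{y \in Y} g(\cdot,y) \ot \c{E}_{y,\delta_Y}.
\]
For any fixed $y_0 \in \f{X}$, once $y_0 \in Y$ the disjoint-support condition forces $F_Y(\cdot,y_0) = g(\cdot,y_0)$ identically in $x$, so $F_Y \to g$ in cct along the directed set of finite subsets of $\f{X}$ ordered by inclusion.

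The only genuinely substantive step is the key lemma: it is where the right-ideal structure plus norm-closure are actually invoked, and the verification of the norm estimate uses the quantitative properties of $\c{O}_{z_0,\delta}$ built into Section~3. Everything downstream is formal. The conceptual point worth highlighting is that the key lemma lets one ``extend a column'' $\phi \in V$ to the constant-in-$y$ function $\phi \ot 1 \in R$, and one can then freely re-assemble arbitrary elements of $\c{R}_V$ column by column inside $R$.
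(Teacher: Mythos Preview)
Your proposal is correct and follows essentially the same approach as the paper: your key lemma is exactly the paper's construction of $F_{f,y}\in R$ via the convolution $f\star\big(\f{m}(\c{B}_{y,\delta})^{-1}\c{E}_{y,\delta}\ot 1\big)$, and the arguments for linearity, closedness, and cc-density proceed in the same way. Your cc-closure construction is in fact slightly cleaner than the paper's, since by first invoking the key lemma your net $F_Y$ achieves $F_Y(\cdot,y_0)=g(\cdot,y_0)$ exactly for $y_0\in Y$, whereas the paper re-does the averaging directly via $k_y\star(\c{E}_{\alpha(y),\delta}\ot\c{E}_{y,\delta})$ and only obtains $\epsilon$-closeness, forcing an $(S,\epsilon)$-indexed net.
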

\begin{proof}
Suppose that $f\in{R}$ and $y\in\f{X}$. Let $\epsilon>0$ be arbitrary and $\delta>0$ with $\delta<\epsilon$ be such that if
$\f{d}(z,z')<\delta$ then $|f(x,z)-f(x,z')|<\epsilon$ for every $x$. Then for every $x,y'$ we have
$|\frac{1}{\f{m}(\c{B}_{y,\delta})}[f\star(\c{E}_{y,\delta}\ot1)](x,y')-f(x,y)|\leq \epsilon+\epsilon\|f\|_\infty$.
This implies that there exists $F_{f,y}\in{R}$ with $F_{f,y}(x,z)=f(x,y)$ for every $x,z$.
Let $h,h'\in{V}$. Let $f,f'\in{R}$ and $y,y'\in\f{X}$ be such that $h=f({\cdot},y)$ and $h'=f'({\cdot},y')$. We have
$h+h'=[F_{f,y}+F_{f',y'}]({\cdot},z)$ for any arbitrary $z$ and thus $h+h'\in{V}$. This shows that $V$ is a linear subspace.
Suppose that $g\in\b{C}(\f{X})$ is a limit point of ${V}$.
There are sequences $(f_{n})_n\in{R}$ and $(y_n)_n\in\f{X}$ such that $f_n({\cdot},y_n)\to g$. It is clear that the sequence
$(F_{f_n,y_n})_n\in R$ converges to an element $G$ of ${R}$ with $G({\cdot},z)=g$ for every $z$. This shows that ${V}$ is norm-closed.
(A similar argument shows that if $R$ is pc-closed then $V$ is pc-closed.) To complete the proof, it is enough to show that
if $K\in\c{R}_{V}$ then there exists a net in $R$ converging to $K$ in cct. Let $K\in\c{R}_{V}$ be fixed. For every $y$ there are
$k_y\in R$ and $\alpha(y)\in\f{X}$ such that $K({\cdot},y)=k_y({\cdot},\alpha(y))$. For every $\epsilon>0$ and every finite subset $S$ of $\f{X}$
there exists $\delta>0$ with the following three properties.
\begin{enumerate}
\item[--] $\delta\|k_{y}\|_\infty<\epsilon/2$ for every $y\in S$.
\item[--] $\c{B}_{y,2\delta}\cap\c{B}_{y',2\delta}=\emptyset$ for $y,y'\in S$ with $y\neq y'$.
\item[--] If $\f{d}(z,z')<2\delta$ then $|k_{y}(x,z)-k_{y}(x,z')|<\epsilon/2$ for every $y\in S$.
\end{enumerate}
Let $K_{S,\epsilon}:=\sum_{y\in S}\frac{1}{\f{m}(\c{B}_{\alpha(y),\delta})}h_{y}\star(\c{E}_{\alpha(y),\delta}\ot\c{E}_{y,\delta})\in{R}$.
Then $\|K_{S,\epsilon}({\cdot},y)-G({\cdot},y)\|_\infty<\epsilon$ for every $y\in S$. Considering the set of all pairs
$(S,\epsilon)$ as a directed set in the obvious way, shows that $K_{S,\epsilon}\xrightarrow{cct}K$.
\end{proof}
Passing through the involution and using Theorem \ref{TT7}, we conclude that for any
norm-closed left ideal $L$ of $\c{M}$, ${V}:=\{f(x,{\cdot}):f\in{L}, x\in\f{X}\}$ is a norm-closed linear subspace and rc-closure of $L$
is equal to $\c{L}_{V}$. Moreover, if $L$ is pc-closed then $V$ is pc-closed.
\begin{corollary}\label{CC1}
The mapping ${V}\mapsto\c{R}_{V}$ (resp. ${V}\mapsto\c{L}_{V}$) establishes a 1-1 correspondence
between norm-closed linear subspaces of $\b{C}(\f{X})$ and cc-closed right (resp. rc-closed left) ideals of $\c{M}$, and also
between pc-closed linear subspaces of $\b{C}(\f{X})$ and pc-closed right (resp. left) ideals of $\c{M}$.
In particular, 1-dimensional and norm-closed 1-codimensional subspaces of $\b{C}(\f{X})$ correspond respectively to minimal and maximal
cc-closed right (resp. rc-closed left) ideals of $\c{M}$.
\end{corollary}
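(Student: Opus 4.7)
The plan is to extract the corollary from Theorems \ref{TT6} and \ref{TT7} together with one small observation about recovering $V$ from $\c{R}_V$. Theorem \ref{TT6} shows that $V\mapsto\c{R}_V$ lands in the cc-closed right ideals (and in the pc-closed right ideals when $V$ is pc-closed), while Theorem \ref{TT7} furnishes the candidate inverse $R\mapsto\{f(\cdot,y):f\in R,\,y\in\f{X}\}$.

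For injectivity of $V\mapsto\c{R}_V$, I would verify the identity $V=\{f(\cdot,y):f\in\c{R}_V,\,y\in\f{X}\}$. One inclusion is definitional; for the other, given $g\in V$ the function $g\ot 1_\f{X}$ lies in $\c{R}_V$ (all of its columns equal $g$), which exhibits $g$ on the right-hand side. For surjectivity onto cc-closed right ideals, I take a cc-closed right ideal $R$, form $V=\{f(\cdot,y):f\in R,\,y\in\f{X}\}$, apply Theorem \ref{TT7} to see that $V$ is norm-closed and that $\c{R}_V$ is the cc-closure of $R$, and conclude $R=\c{R}_V$. The pc-closed statement is immediate from the pc-closed halves of Theorems \ref{TT6} and \ref{TT7}. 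The left-ideal versions follow formally via the involution using $\c{R}_V^*=\c{L}_{\bar V}$ and $\c{L}_V^*=\c{R}_{\bar V}$, since $*$ swaps cc-closed right ideals with rc-closed left ideals and preserves pc-closed ideals.

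For the final sentence, observe that $V\mapsto\c{R}_V$ is inclusion-preserving and bijective, so minimal and maximal objects correspond under it. Minimal nonzero norm-closed subspaces of $\b{C}(\f{X})$ are exactly the $1$-dimensional ones, since any $0\neq v\in V$ spans a norm-closed subspace $\mathbb{C}v\subseteq V$. A norm-closed $1$-codimensional subspace is maximal among proper subspaces; conversely, if $V$ is norm-closed and proper of codimension at least $2$, then for any $v\notin V$ the sum $V+\mathbb{C}v$ is still norm-closed (the sum of a closed subspace and a finite-dimensional subspace is closed) and still proper, so $V$ is not maximal. There is no substantial obstacle here; essentially everything is already contained in Theorem \ref{TT7}, and the corollary is a bookkeeping consequence together with the elementary recovery identity for $V$.
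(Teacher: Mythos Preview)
Your proposal is correct and is essentially the argument the paper has in mind: the corollary is stated without proof in the paper, as an immediate consequence of Theorems \ref{TT6} and \ref{TT7}, and your write-up supplies exactly the bookkeeping (the recovery identity $V=\{f(\cdot,y):f\in\c{R}_V,\ y\in\f{X}\}$ via $g\ot 1_\f{X}$, the observation that pc-closed implies cc-closed so Theorem \ref{TT7} applies, and the order-theoretic reading of minimal/maximal) that the paper leaves implicit.
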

\begin{corollary}\label{CC2}
There is no nontrivial ideal in $\c{M}$ mutually closed under both cct and rct.
In particular, there is no nontrivial pc-closed ideal in $\c{M}$.
\end{corollary}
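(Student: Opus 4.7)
The plan is to reduce Corollary \ref{CC2} to Corollary \ref{CC1} by viewing a two-sided ideal simultaneously as a right ideal and as a left ideal, then to exploit the pairing between ``row structure'' and ``column structure'' through the elementary tensors $v\ot u$.

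First I would let $I$ be a two-sided ideal that is closed in both cct and rct. Since $I$ is in particular a cc-closed right ideal, Corollary \ref{CC1} yields a norm-closed subspace $V\subseteq\b{C}(\f{X})$ with $I=\c{R}_V$. Since $I$ is also an rc-closed left ideal, the same corollary (applied to the left-hand correspondence) yields a norm-closed subspace $W\subseteq\b{C}(\f{X})$ with $I=\c{L}_W$. Before using this, I would record the easy observation that the correspondences respect triviality: $\c{R}_V=\{0\}$ iff $V=\{0\}$, and $\c{R}_V=\c{M}$ iff $V=\b{C}(\f{X})$ (the latter by considering $u\ot 1$), with the analogous statements for $\c{L}_W$. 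Thus if $I$ is nontrivial, then both $V$ and $W$ are proper and nonzero.

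The main step is then to derive a contradiction from the equality $\c{R}_V=\c{L}_W$ for nontrivial $V,W$. Pick any nonzero $v\in V$ and any $x_0\in\f{X}$ with $v(x_0)\neq 0$. For an arbitrary $u\in\b{C}(\f{X})$, the element $v\ot u\in\c{M}$ satisfies $(v\ot u)(\cdot,y)=u(y)v\in V$ for every $y$, hence $v\ot u\in\c{R}_V=I=\c{L}_W$. Evaluating the column condition at $x_0$ gives $(v\ot u)(x_0,\cdot)=v(x_0)u\in W$, and since $v(x_0)\neq 0$ this forces $u\in W$. As $u$ was arbitrary, $W=\b{C}(\f{X})$, contradicting the fact that $W$ is proper. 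This proves the first sentence of the corollary.

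For the ``in particular'' clause, I would recall the remark in Section 2 that pct is contained in the intersection of cct and rct. A coarser topology has fewer open sets and hence fewer closed sets, so any pc-closed subset of $\c{M}$ is automatically both cc-closed and rc-closed. A nontrivial pc-closed ideal would therefore be a nontrivial ideal mutually closed under cct and rct, contradicting the first part. I do not anticipate a genuine obstacle here; the only mildly delicate point is to make sure that Corollary \ref{CC1} really does apply to a two-sided ideal when regarded as a right (or left) ideal, which is immediate from the statement of that corollary.
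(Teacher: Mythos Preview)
Your argument is correct and is essentially the same as the paper's: the paper also writes a nonzero cc- and rc-closed ideal as $\c{R}_V=\c{L}_W$ via Corollary~\ref{CC1}, picks $f_0\in V$ with $f_0(x_0)=1$, and uses $f_0\ot g\in\c{R}_V$ to force $g\in W$ for all $g$, hence $W=\b{C}(\f{X})$ and $I=\c{M}$. Your added remarks on the triviality correspondence and on the ``in particular'' clause (via $\mathrm{pct}\subseteq\mathrm{cct}\cap\mathrm{rct}$) just make explicit what the paper leaves to the reader.
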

\begin{proof}
Let ${I}$ be a nonzero cc-closed and rc-closed ideal. There are closed linear subspaces
${V},{W}\subseteq\b{C}(\f{X})$ such that ${I}=\c{R}_{V}=\c{L}_{W}$. Since ${V}\neq0$ there are $f_0\in{V}$ and $x_0\in\f{X}$ with $f_0(x_0)=1$.
For every $g\in\b{C}(\f{X})$ we have $f_0\ot g\in\c{R}_{V}$. Thus $g=(f_0\ot g)(x_0,-)\in{W}$ and ${W}=\b{C}(\f{X})$. So, ${I}=\c{M}$.
\end{proof}
\section{Canonical representations of $\c{M}$}
For a Banach algebra $A$ a Banach space $E$ is called Banach left $A$-module if $E$ is a left $A$-module in the algebraic sense and such that
the action of $A$ on $E$ is a bounded bilinear operator. Banach right $A$-modules and Banach $A$-bimodules are defined similarly.
Let $\b{B}(E)$ denote the Banach algebra of bounded linear operators on $E$ and $\b{K}(E)\subseteq \b{B}(E)$ be the closed ideal of compact operators.
Any Banach left $A$-module structure on $E$ gives rise to a bounded representation $A\to\b{B}(E)$, $a\mapsto(\omega\mapsto a\omega)$, and vice versa.
The statements of the following theorem are standard results and can be find for instance in \cite{HalmosSunder1}.
\begin{theorem}\label{TT8}
Let $E$ denote any of the Banach spaces $\b{L}^p(\f{m})$ ($1\leq p\leq\infty$) or $\b{C}(\f{X})$. Then $\rho:\c{M}\to\b{K}(E)$, defined by
$[\rho(f)g](x)=\int_\f{X}f(x,y)g(y)\r{d}\f{m}(y)$ ($g\in E$),
is a well-defined faithful bounded representation. Moreover, the following statements hold.
\begin{enumerate}
\item[(i)] In the case that $E=\b{L}^2(\f{m})$, $\rho$ is a $*$-representation.
\item[(ii)] In the case that $E=\b{L}^\infty(\f{m})$ or $E=\b{C}(\f{X})$, $\rho$ is isometric.
\end{enumerate}
\end{theorem}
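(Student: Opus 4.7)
The theorem splits into six sub-claims: (a) $\rho(f)$ is a bounded operator on each $E$ with $\|\rho(f)\|_{\r{op}}\leq\|f\|_\infty$; (b) $\rho$ is multiplicative; (c) $\rho(f)$ is compact; (d) $\rho$ is injective; (e) $\rho(f^*)=\rho(f)^*$ when $E=\b{L}^2(\f{m})$; and (f) $\rho$ is isometric in case (ii). Items (a), (b), (d), (e) are direct applications of Fubini and H\"older; compactness in (c) comes from a density argument using the injective tensor product decomposition of $\b{C}(\f{X}^2)$ already recorded in the Notations; the isometry in (f) is the most delicate point and is proved by evaluating $\rho(f)$ against test functions concentrated near a point where $|f|$ attains its supremum.

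\textbf{Boundedness and continuity.} For $E=\b{L}^p(\f{m})$ with $g\in E$ and $x\in\f{X}$, the elementary bound
\[
|\rho(f)g(x)|\leq\int|f(x,y)||g(y)|\r{d}\f{m}(y)\leq\|f\|_\infty\|g\|_{L^1}\leq\|f\|_\infty\|g\|_{L^p}
\]
(the last step by $\f{m}(\f{X})=1$) gives, after integrating (or taking sup when $p=\infty$), the operator bound $\|\rho(f)\|_{\r{op}}\leq\|f\|_\infty$. For $E=\b{C}(\f{X})$, uniform continuity of $f$ on the compact product $\f{X}^2$ together with dominated convergence shows that $x\mapsto\rho(f)g(x)$ is continuous, and the same pointwise estimate gives the bound.

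\textbf{Multiplicativity, $*$-identity, injectivity, compactness.} Fubini applied to $\iint f(x,z)g(z,y)h(y)\,\r{d}\f{m}(z)\r{d}\f{m}(y)$ yields $\rho(f\star g)=\rho(f)\rho(g)$. For (i), interchanging the order of integration in $\langle\rho(f)g,h\rangle_{L^2}$ yields $\langle g,\rho(f^*)h\rangle_{L^2}$. For injectivity: if $\rho(f)=0$ then $\int f(x,\cdot)g\,\r{d}\f{m}=0$ for every $g\in\b{C}(\f{X})\subseteq E$, so by the Riesz representation theorem the measure $f(x,\cdot)\f{m}$ vanishes; since $\r{Sp}\f{m}=\f{X}$ and $f(x,\cdot)$ is continuous, this forces $f\equiv 0$. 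For compactness, the canonical identification $\b{C}(\f{X}^2)\cong\b{C}(\f{X})\check{\ot}\b{C}(\f{X})$ exhibits $f$ as the uniform limit of finite sums $\sum_i h_i\ot k_i$; the corresponding operator $\rho(\sum_i h_i\ot k_i)g=\sum_i(\int k_ig\,\r{d}\f{m})\,h_i$ is of finite rank, and the norm-continuity of $\rho$ proved above places $\rho(f)$ in the operator-norm closure of finite-rank operators, i.e.\ in $\b{K}(E)$.

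\textbf{Isometry in (ii).} One inequality is part (a). For the reverse, choose $(x_0,y_0)\in\f{X}^2$ with $|f(x_0,y_0)|=\|f\|_\infty$ (available by compactness of $\f{X}^2$) and, given $\epsilon>0$, pick $\delta>0$ small enough that $|f(x_0,y)-f(x_0,y_0)|<\epsilon$ for $y\in\c{B}_{y_0,\delta}$. The plan is to exhibit $g\in E$ with $\|g\|\leq 1$ and $|\rho(f)g(x_0)|\geq\|f\|_\infty-O(\epsilon)$, obtained by suitably normalizing a test function of the type $\c{E}_{y_0,\delta}$ introduced in Section 3 (multiplied by the unimodular factor $\overline{f(x_0,y_0)}/|f(x_0,y_0)|$); the computation parallels the proof of Theorem \ref{TT1} and uses $\r{Sp}\f{m}=\f{X}$ to ensure $\f{m}(\c{B}_{y_0,\delta})>0$. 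The main obstacle is calibrating the test function so that localization at $y_0$ is compatible with the unit-norm constraint in $\b{L}^\infty(\f{m})$ or $\b{C}(\f{X})$; this is the critical step, and details are deferred to \cite{HalmosSunder1} in accordance with the standing reference.
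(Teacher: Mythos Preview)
The paper gives no proof of this theorem; it simply declares the results standard and cites \cite{HalmosSunder1}. Your sketches of boundedness, multiplicativity, the $*$-identity, injectivity, and compactness are correct and already more detailed than anything in the paper.

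The isometry argument in (f), however, has a genuine gap that cannot be filled. If $g=c\,\c{E}_{y_0,\delta}$ is a bump function with $\|g\|_\infty\le1$, then
\[
|\rho(f)g(x_0)|=\Big|\int_\f{X} f(x_0,y)g(y)\,\r{d}\f{m}(y)\Big|\le \|f\|_\infty\,\f{m}(\c{O}_{y_0,\delta})\xrightarrow[\delta\to0]{}0,
\]
so localization near $y_0$ drives the value toward $0$, not toward $\|f\|_\infty$; no ``normalization'' repairs this while keeping $\|g\|_\infty\le1$. In fact the obstacle is not technical but reflects a problem with the statement: for an integral operator with continuous kernel acting on $\b{C}(\f{X})$ or $\b{L}^\infty(\f{m})$ one has
\[
\|\rho(f)\|_{\r{op}}=\sup_{x\in\f{X}}\int_\f{X}|f(x,y)|\,\r{d}\f{m}(y),
\]
which is in general strictly smaller than $\|f\|_\infty$. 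For instance, with $\f{X}=[0,1]$, Lebesgue measure, and $f(x,y)=2y-1$ one gets $\|f\|_\infty=1$ but $\|\rho(f)\|_{\r{op}}=\int_0^1|2y-1|\,\r{d}y=\tfrac12$. So assertion (ii) as written is false, and your hesitation about ``calibrating the test function so that localization at $y_0$ is compatible with the unit-norm constraint'' is precisely where the argument must break down.
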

It is clear that for any Banach space $E$, $\b{C}(\f{X};E)$ is a Banach right (resp. left) $\c{M}$-module in the canonical way.
Its module action is denoted by the same symbol $\star$ and is given by $(g\star f)(y)=\int_\f{X}g(z)f(z,y)\r{d}\f{m}(z)$
(resp. $(f\star g)(x)=\int_\f{X}f(x,z)g(z)\r{d}\f{m}(z)$) for $f\in\c{M}$ and $g\in\b{C}(\f{X};E)$. Similarly, $\b{C}(\f{X}^2;E)$ becomes a Banach
$\c{M}$-bimodule.
\section{Derivations on $\c{M}$}
Let $A$ be a Banach algebra and $E$ be a Banach $A$-bimodule. A (bounded) \emph{derivation} from $A$ to $E$ is a (bounded) linear map
$D:A\to E$ satisfying $D(ab)=aD(b)+D(a)b$ ($a,b\in A$). $D$ is called \emph{inner} if there exists $\omega\in E$ such that $D(a)=a\omega-\omega a$
for every $a$. $D$ is called \emph{approximately inner} \cite{GhahramaniLoy1} if there is a net $(\omega_\lambda)_\lambda$ in $E$ such that
$D(a)=\lim_\lambda a\omega_\lambda-\omega_\lambda a$. If $(\omega_\lambda)_\lambda$ can be chosen so as to be a sequence
then $D$ is called \emph{sequentially} approximate inner.
\begin{theorem}\label{TT9}
Suppose that the condition (C2) of Theorem \ref{TT4} is satisfied, and let $E$ be a Banach $\c{M}$-bimodule
such that its module operation $\diamond:\c{M}\ot E\ot\c{M}\to E$ is continuous w.r.t. injective tensor norm, and such that for every norm approximate
unit $(\c{E}_n)_n$ of $\c{M}$ we have $\c{E}_n\diamond \omega\to \omega$ for every $\omega\in E$.
Then any bounded derivation from $\c{M}$ to $E$ is sequentially approximate inner.
\end{theorem}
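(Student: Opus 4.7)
The plan is as follows. By Theorem~\ref{TT4}, the hypothesis~(C2) yields a sequential two-sided norm-approximate unit $(\c{E}_n)_n\subseteq\c{M}$; by Proposition~\ref{PP1} this sequence cannot be norm-bounded, since otherwise $\f{X}$ would be finite and the theorem would be trivial ($\c{M}$ is then a matrix algebra and every derivation on it is inner). I would use $(\c{E}_n)_n$ as the only nontrivial ingredient beyond $D$ itself. First, I extract the vanishing identities coming from the derivation rule: writing $D(a\star\c{E}_n)=D(a)\diamond\c{E}_n+a\diamond D(\c{E}_n)$, using $a\star\c{E}_n\to a$ in $\c{M}$-norm (so $D(a\star\c{E}_n)\to D(a)$ by boundedness of $D$) together with the assumed essentiality $D(a)\diamond\c{E}_n\to D(a)$ in $E$, I obtain $a\diamond D(\c{E}_n)\to 0$; the symmetric computation gives $D(\c{E}_n)\diamond a\to 0$. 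The naive candidate $\omega_n:=-D(\c{E}_n)$ therefore produces $a\diamond\omega_n-\omega_n\diamond a\to 0$ and so only approximates the \emph{zero} derivation, not $D$; a more refined choice is required.

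The main step is to construct the correct sequence. I would build $\omega_n$ as an algebraic combination of $D(\c{E}_n)$ flanked by one or two copies of $\c{E}_n$ acting through the bimodule, e.g.\
\[
\omega_n:=D(\c{E}_n)\diamond\c{E}_n-\c{E}_n\diamond D(\c{E}_n)\diamond\c{E}_n,
\]
or a symmetric variant. Expanding $a\diamond\omega_n-\omega_n\diamond a$ by repeated use of the derivation identity, the aim is to isolate a principal surviving term of the form $\c{E}_n\diamond D(a)\diamond\c{E}_n$, which tends to $D(a)$ by two-sided essentiality, plus residual terms each built from differences $a\star\c{E}_n-a$ or $\c{E}_n\star a-a$ flanked by further factors $D(\c{E}_n)$ and/or $\c{E}_n$. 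The residues are then bounded via the injective tensor norm hypothesis on $\diamond\colon\c{M}\check{\ot} E\check{\ot}\c{M}\to E$: any sum $\sum_i a_i\diamond\omega_i\diamond b_i$ is controlled by $\sup\|\sum_i a_i(x,y)b_i(x',y')\omega_i\|_E$ over $(x,y),(x',y')\in\f{X}^2$, rather than by $\sum\|a_i\|\|\omega_i\|\|b_i\|$. Since $a\star\c{E}_n-a$ is uniformly small pointwise, the relevant residual combinations have small injective tensor norm and vanish in the limit, even though individual factors $\c{E}_n$ or $D(\c{E}_n)$ grow in norm as $n\to\infty$.

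The main obstacle is precisely this unboundedness of $(\c{E}_n)_n$: the classical bounded-approximate-unit argument, in which $-D(\c{E}_n)$ would itself suffice, fails here as shown in the first step. The injective tensor norm hypothesis on $\diamond$ is exactly the substitute for norm-boundedness, handling cross-products of unbounded factors when arranged in the right tensor combination, and this is the reason the author has built that hypothesis into the statement. The ``sequentially'' qualifier in the conclusion is then automatic, since Theorem~\ref{TT4} already delivers a genuinely sequential approximate unit.
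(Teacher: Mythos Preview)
Your proposal has a genuine gap, and it is precisely at the point where you invoke the injective tensor norm hypothesis.  Every term appearing in your candidate
\[
\omega_n=D(\c{E}_n)\diamond\c{E}_n-\c{E}_n\diamond D(\c{E}_n)\diamond\c{E}_n
\]
and in the resulting expansion of $a\diamond\omega_n-\omega_n\diamond a$ is a \emph{simple} tensor in $\c{M}\check{\ot}E\check{\ot}\c{M}$, e.g.\ $(a-a\star\c{E}_n)\diamond D(\c{E}_n)\diamond\c{E}_n$.  For a simple tensor the injective norm is exactly the product of the factor norms, so the bound you quote gives nothing better than $\|a-a\star\c{E}_n\|_\infty\,\|D(\c{E}_n)\|_E\,\|\c{E}_n\|_\infty$.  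Since $\|\c{E}_n\|_\infty\sim\f{m}(\c{B}_{x,\delta_n})^{-1}\to\infty$ and $\|D(\c{E}_n)\|_E\le\|D\|\,\|\c{E}_n\|_\infty$, you would need $\|a-a\star\c{E}_n\|_\infty$ to decay like $\f{m}(\c{B}_{x,\delta_n})^{2}$, which is controlled only by the modulus of continuity of $a$ and is in general far too slow.  The injective tensor hypothesis gives no leverage here because there is no cancellation across a sum of tensors.

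The paper's argument works for exactly this reason.  It never writes the implementing element as a finite combination of $\c{E}_n$ and $D(\c{E}_n)$.  Instead it builds a genuinely non-simple tensor $\c{G}_n\in\c{M}\check{\ot}\c{M}\cong\b{C}(\f{X}^4)$, namely $\c{G}_n(x,z,z',y)=\alpha_n^{-1}G_n(x,y)$ (constant in the inner variables), for which one can check directly in sup-norm on $\f{X}^4$ that $h\star\c{G}_n-\c{G}_n\star h\to0$ while $\Lambda(\c{G}_n)$ is a two-sided norm-approximate unit.  The derivation enters only through the bounded linear map $\Gamma:\c{M}\check{\ot}\c{M}\to E$, $f\ot g\mapsto f\diamond D(g)$, whose boundedness is exactly where the injective tensor hypothesis is used; one then sets $K_n=\Gamma(\c{G}_n)$ and obtains $h\diamond K_n-K_n\diamond h=\Gamma(h\star\c{G}_n-\c{G}_n\star h)+\Lambda(\c{G}_n)\diamond D(h)\to D(h)$.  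The point you are missing is this ``approximate diagonal'' $\c{G}_n$: an element of $\c{M}\check{\ot}\c{M}$ that is asymptotically central and multiplies down to an approximate unit, but is not a simple tensor of two approximate units.
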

\begin{proof}
Let $D:\c{M}\to E$ be a bounded derivation.
Let $\Gamma:\c{M}\check{\ot}\c{M}\to E$ be the bounded linear map defined by $f\ot g\mapsto f\diamond D(g)$.
Also let $\Lambda:\c{M}\check{\ot}\c{M}\to\c{M}$ denote the convolution product.
It is not hard to verify the following two identities
for $h\in\c{M}$ and $F\in\c{M}\check{\ot}\c{M}$.
$$\Gamma(h\star F)=h\diamond\Gamma(F),\hspace{10mm}\Gamma(F\star h)=\Lambda(F)\diamond D(h)+\Gamma(F)\diamond h.$$
Let the sequence $(\delta_n)_n$ be as in the statement of Theorem \ref{TT4} and let $\alpha_n=\f{m}(\c{B}_{x,\delta_n})$ for every $x\in\f{X}$.
By Lemma \ref{LL3} there is $r_n$ such that $\delta_n<r_n<2\delta_n$ and $\f{m}(\c{B}_{x,r_n}-\c{B}_{x,\delta_n})<\delta_n\alpha_n$.
Choose a continuous function $G_n:\f{X}^2\to[0,1]$ such that $G_n$ has constant values $1$ and $0$ respectively on
$\{(x,y):\f{d}(x,y)\leq\delta_n\}$ and $\{(x,y):\f{d}(x,y)\geq r_n\}$, and let $\c{G}_n\in\b{C}(\f{X}^4)$ be defined by
$\c{G}_n(x,z,z',y)=\frac{1}{\alpha_n}G_\delta(x,y)$. Note that we have $\Lambda(\c{G}_n)=\frac{1}{\alpha_n}G_n$.
It is not hard to verify that $(\Lambda(\c{G}_n))_n$ is a two-sided norm-approximate unit for $\c{M}$ and
$\lim_{n\to\infty}f\star\c{G}_n-\c{G}_n\star f=0$ for every $f\in\c{M}$. Let $K_n=\Gamma(\c{G}_n)\in E$. For the sequence $(K_n)_n$ and $h\in\c{M}$
we have,
\begin{align*}
\lim_{n\to\infty}h\diamond K_n-K_n\diamond h&=\lim_{n\to\infty}h\diamond \Gamma(\c{G}_n)-\Gamma(\c{G}_n)\diamond h\\
&=\lim_{n\to\infty}\Gamma(h\star\c{G}_n)-\Gamma(\c{G}_n\star h)+\Lambda(\c{G}_n)\diamond D(h)\\
&=\Gamma(\lim_{n\to\infty}h\star\c{G}_n-\c{G}_n\star h)+D(h)\\
&=D(h).
\end{align*}
This completes the proof.
\end{proof}
For any Banach space $E$, the Banach $\c{M}$-bimodule $\b{C}(\f{X}^2;E)$,
mentioned in the preceding section, satisfies the conditions of Theorem \ref{TT9}.
\bibliographystyle{amsplain}

\begin{thebibliography}{10}
\bibitem{BadeDalesLykova1}
W.G. Bade, H.G. Dales, Z.A. Lykova,
\emph{Algebraic and strong splittings of extensions of Banach algebras},
No. 656. American Mathematical Soc., 1999.
\bibitem{Beaver1}
S. Beaver,
\emph{Banach algebras of integral operators, off-diagonal decay, and applications in wireless communications},
Ph.D. thesis, University of California, arXiv:math/0406198 [math.OA] (2004).
\bibitem{EisnerFarkasHaaseNagel1}
T. Eisner, B. Farkas, M. Haase, R. Nagel,
\emph{Operator theoretic aspects of ergodic theory},
Springer International Publishing AG, 2015.
\bibitem{GhahramaniLoy1}
F. Ghahramani, R.J. Loy,
\emph{Generalized notions of amenability},
Journal of Functional Analysis 208, no. 1 (2004): 229--260.
\bibitem{HalmosSunder1}
P.R. Halmos, V.S. Sunder,
\emph{Bounded integral operators on $L^2$ spaces},
Vol. 96. Springer Science \& Business Media, 2012.
\bibitem{HofmannMorris1}
K.H. Hofmann, S.A. Morris,
\emph{The structure of compact groups: a primer for students-a handbook for the expert},
Vol. 25. Walter de Gruyter, 2006.
\bibitem{HytonenNeervenVeraarWeis1}
T. Hytonen, J. van Neerven, M. Veraar, L. Weis,
\emph{Analysis in Banach spaces},
in preparation (2015).
\bibitem{Klee1}
V.L. Klee,
\emph{Invariant metrics in groups (solution of a problem of Banach)},
Proceedings of the American Mathematical Society 3, no. 3 (1952): 484--487.
\end{thebibliography}

\end{document}